\newcolumntype{L}{>{$}l<{$}} 
\definecolor{blue}{rgb}{0,0,1}
\definecolor{red}{rgb}{1,0,.2}
\newcommand{\abs}[1]{\left \vert#1\right \vert}
\newtheorem{theorem}{Theorem}[section]
\newtheorem{lemma}[theorem]{Lemma}
\newtheorem{corollary}[theorem]{Corollary}
\newtheorem{proposition}[theorem]{Proposition}
\newtheorem{definition}[theorem]{Definition}
\numberwithin{equation}{section}
\theoremstyle{definition}
\theoremstyle{definition}
\numberwithin{equation}{section}
\theoremstyle{plain}
\theoremstyle{remark}
\newtheorem{remark}[theorem]{Remark}
\newtheorem{case[theorem]}{Case}
\newcommand{\ubar}[1]{\stackunder[1.2pt]{$#1$}{\rule{.9ex}{.075ex}}}
\newcommand{\ox}{{\bar x}}
\newcommand{\oy}{{\bar y}}
\newcommand{\oz}{{\bar z}}
\newcommand{\ux}{{\ubar x}}
\newcommand{\uy}{{\ubar y}}
\newcommand{\uz}{{\ubar z}}
\newcommand{\vs}{\vskip.125in}
\newcommand{\N}{\ensuremath{\mathbb{N}}}
\newcommand{\Q}{\ensuremath{\mathbb{Q}}}
\newcommand{\R}{\ensuremath{\mathbb{R}}}
\newcommand{\Z}{\ensuremath{\mathbb{Z}}}
\newcommand{\al}{\alpha}
\newcommand{\de}{\delta}
\newcounter{@ToDo}
\newcommand{\todo@helper}[1]{%
	({\color{red}TODO~\arabic{@ToDo}: {#1\@addpunct{.}}})%
}
\newcommand{\todo}[1]{\stepcounter{@ToDo}%
	\relax\ifmmode\text{\todo@helper{#1}}%
	\else\todo@helper{#1}\fi%
}
\author{Rajula Srivastava and Krystal Taylor}
\address{Mathematical Institute of the University of Bonn and Max Planck Institute for Mathematics, Bonn}
\address{Endenicher Allee 60, 53115, Bonn, Germany.}
\email{rajulas@math.uni-bonn.de}
\address{Department of Mathematics, The Ohio State University}
\email{taylor.2952@osu.edu}
\title[Lattice Points near Kor\'anyi spheres]{\centering{Counting Lattice Points near Kor\'anyi Spheres via Generalized Radon Transforms}}
\begin{document}

\begin{abstract} 
In this note, we study a lattice point counting problem for spheres in Heisenberg groups, incorporating \textit{both} the non-isotropic dilation structure and the non-commutative group law. More specifically, we establish an upper bound for the average number of lattice points in a $\delta$-neighborhood of a Kor\'anyi sphere of large radius, where the average considered is over \textit{Heisenberg group translations} of the sphere. This is in contrast with previous works, which either count lattice points on dilates of a \textit{fixed} sphere (see \cites{GNT, Gath2}) or consider averages over \textit{Euclidean} translations of the sphere (see \cites{CT}). We observe that incorporating the Heisenberg group structure allows us to circumvent the degeneracy arising from the vanishing of the Gaussian curvature at the poles of the Kor\'anyi sphere. In fact, in lower dimensions (the first and second Heisenberg group), our method establishes an upper bound for this number which gives a logarithmic improvement over the bound implied by the previously known results. Even for the higher dimensional Heisenberg groups, we recover the bounds implied by the main result of \cite{GNT} using a completely different approach of generalized Radon transforms.

Further, we obtain upper bounds for the average number of lattice points near more general spheres described with respect to radial, Heisenberg homogeneous norms as considered in \cite{GNT}.

\end{abstract}

\maketitle
\setcounter{tocdepth}{2}
\tableofcontents

\section{Introduction} 
In this paper, we establish upper bounds on the \textit{average} number of lattice points near Kor\'anyi spheres in the Heisenberg group. Here the average is considered over translations of the sphere with respect to the Heisenberg group law. Unlike the previous works in the area (see \cite{GNT, Gath2, CT}), we incorporate both the translation and scaling structure of the Heisenberg group into the lattice point counting problem. The ``smoothening effect" of the former is reflected in the Sobolev mapping properties of the Radon transform associated to averaging over the Kor\'anyi sphere. While the role of generalized Radon transforms in variable coefficient lattice point counting problems has been studied in \cite{IT}, it does not consider the Heisenberg group setting. As such, a primary objective of this manuscript is to bridge this gap between techniques originating from the theory of oscillatory integral operators (from Euclidean harmonic analysis) and concrete counting problems on Heisenberg groups.

More precisely, we shall use the fact that that the generalized Radon transform associated to averaging over the Kor\'anyi sphere has non-vanishing rotational curvature, in the sense that the associated Monge-Ampere determinant is non-zero (see \S\ref{sssec IT} and \eqref{eq MongeAmp} for a precise definition). This has been established previously by the first author in \cite{sri24}, in connection with Lebesgue space estimates for the Kor\'anyi maximal function (also see \cite{OSch98}). In this paper, we present a more streamlined argument adapted to the counting problem at hand. Moreover, for more general Heisenberg spheres, we show that the rank of the Monge-Ampere matrix can drop at most by one. This fact has consequences for the $L^2$ -Sobolev mapping property of the associated averaging operator. 

With these mapping properties in tow, we are able to circumvent the issue of vanishing curvature at the poles of the Kor\'anyi sphere, which the previous papers \cite{GNT, Gath2, CT, Campo} had to spend considerable effort to mitigate. We can thus reduce matters to applying energy estimates of Iosevich and the second author from \cite{IT} (for the Kor\'anyi sphere) and of 
Campolongo, and of Campolongo and the second author from \cite{Campo, CT} (for Heisenberg spheres in general) to provide a short argument for the \textit{average} number of lattice points in a small neighborhood of the spheres. 

In fact, for the Kor\'anyi spheres in lower dimensions (the first and second Heisenberg group), our method establishes an upper bound for the average number of lattice points which gives a logarithmic improvement over the bound implied by the previously known results. Even for the higher dimensional Heisenberg groups, we recover the bounds implied by the main result of \cite{GNT}. To state our results more precisely and to summarize relevant background results, we begin by introducing some notation and definitions.
\vs

Let $\mathsf{H}^n=\mathbb{R}^{2n} \times \mathbb{R}$ be the Heisenberg group of real Euclidean dimension $D=2n+1$. We shall use the notation $x=(\ux, \ox)$, $y=(\uy, \oy)$, with $\ux, \uy\in \mathbb{R}^{2n}$ and $\ox, \oy\in\mathbb{R}$, to denote elements of $\mathsf{H}^n$. The group law is given by 
\begin{equation}
\label{grouplaw}
x * y =(\ux+\uy, \ox+\oy + \tfrac{1}{2}\ux^\intercal J\uy), \end{equation}
where \[J:=\begin{pmatrix}
0 & & I_n\\
-I_n &  & 0
\end{pmatrix}\]
is the $2n\times 2n$ standard symplectic matrix (and $I_n$ is the $n\times n$ identity matrix). One may verify that $J^2 = -I_{2n}$, so that $J$ is invertible. 
For example, when $n=1$, the group law is explicitly given by
\begin{equation}\label{d=3}
x*y = \left(\underline{x} + \underline{y}, x_3 + y_3 + \frac{1}{2}(x_1 y_2 - x_2 y_1)\right). 
\end{equation}
\vs 

Further, the Kor\'anyi  norm (also called the Cygan- Kor\'anyi norm, see \cites{Cy, Kor}) of an element $x\in \mathsf{H}^n$ is defined to be \[\Vert x \Vert_4:=(|\ux|^4+16|\ox|^2)^{\frac{1}{4}},\] 
where $|\cdot|$ denotes the Euclidean norm. This norm is homogeneous of degree one with respect to the natural parabolic dilation structure $\delta_t((\ux,\ox)):=(t\ux,t^2 \ox)$ on $\mathsf{H}^n$.

Define the Kor\'anyi ball \textit{centered at the origin} and of radius $R$ to be 
\begin{equation}\label{Hballzero}
B_R^{4}(\vec{0}) := \big\{y=(\uy,\oy) \in \mathbb{R}^{2n} \textsf{ x } \mathbb{R}: |\uy|^4 + 16|\oy|^{2} \leq R^4\big\}.
\end{equation}

There exists a unique Radon measure $\mu$ on $B_R^{4}(\vec{0})$ induced by the Haar measure on $\mathsf{H}^n$. 
These balls present an interesting evolution in lattice point counting problems due to points of vanishing curvature on their surfaces.  
The Gaussian curvature of the boundary of $B_R^{4}(\vec{0})$
vanishes at the north and south poles. Indeed, all principle curvatures of the Kor\'anyi sphere vanish at these points.

\begin{multicols}{3}
\begin{figure}[H]
  \centering
\includegraphics[scale=.25]{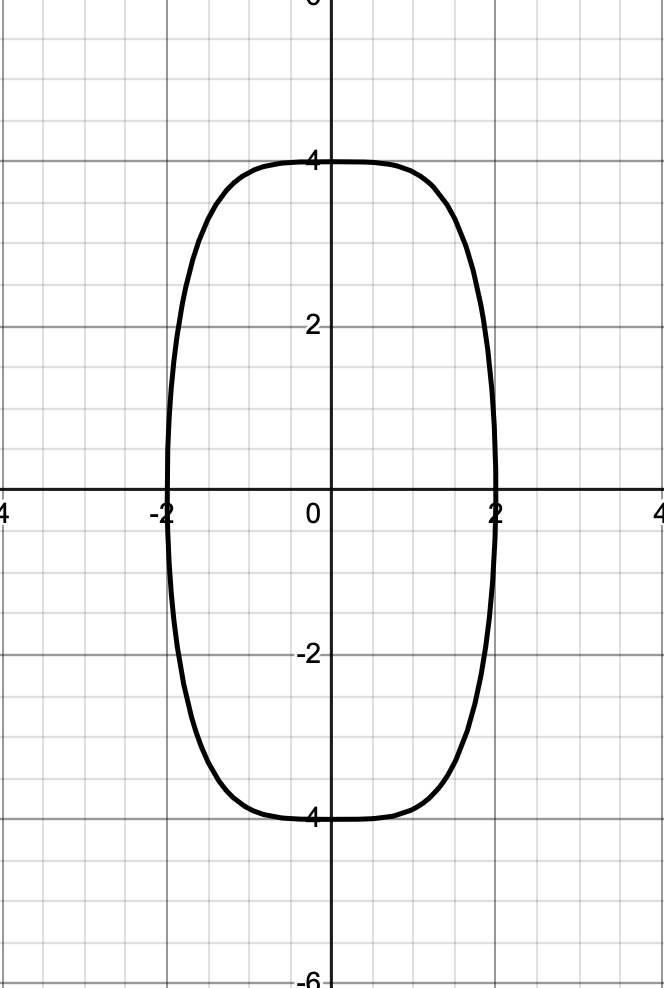}
  \caption{$x=(0,0,0)$, $y_2=x_2$ }
\end{figure}
\begin{figure}[H]
  \centering
\includegraphics[scale=.25]{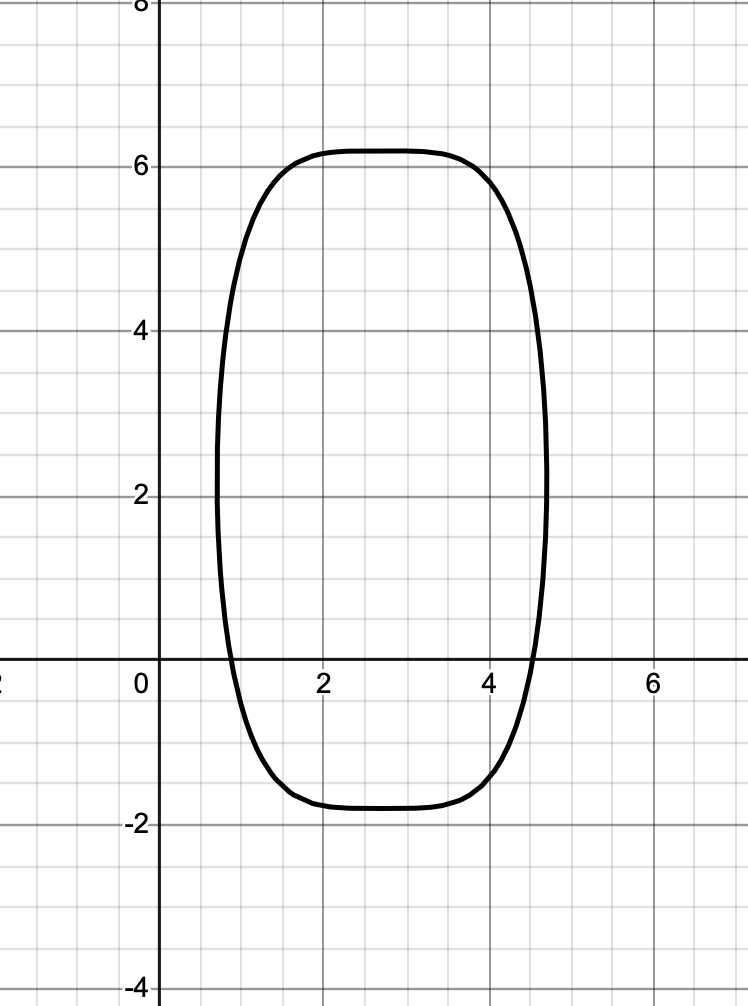}
  \caption{$x=(3,0,3)$, $y_2=x_2$ }
\end{figure}
\begin{figure}[H]
  \centering
\includegraphics[scale=.25]{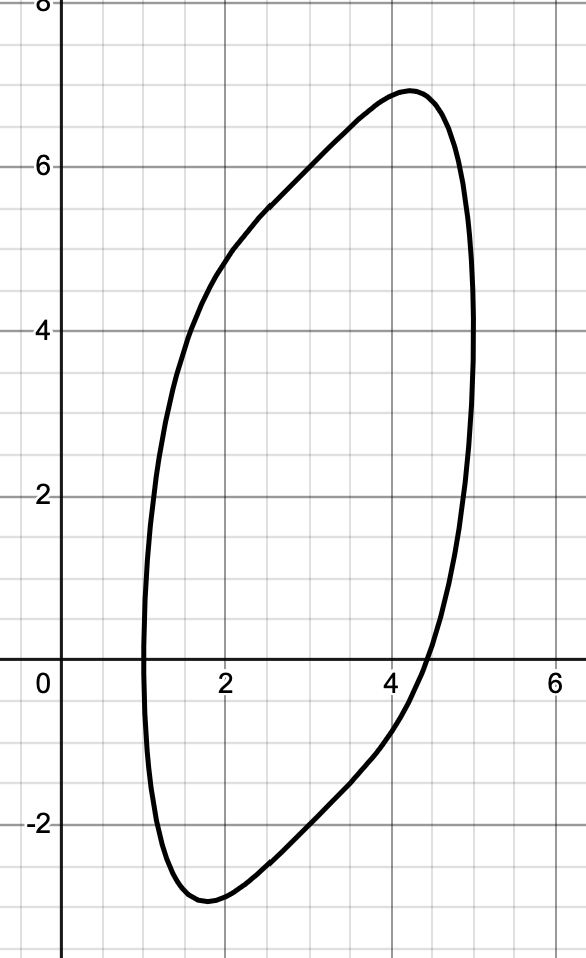}
  \caption{$x=(3,2,2)$, $y_2=x_2$}
  \end{figure}
\end{multicols}
\centerline{
  Level sets of 
$\{ \phi_4(x,y) =R\}$ when $D=3$ and $R=2$ 
}
\vs

Let $\alpha\in \mathbb{N}$ with $\alpha\geq 2$. The Kor\'anyi norm is a special case of the family of anisotropic gauge functions on Heisenberg groups given by
\begin{equation}\label{HeisenbergNorm}
\Vert x \Vert_{\alpha} := \big(\abs{\ux}^{\alpha}+C_{\alpha}\abs{\ox}^{\alpha/2}\big)^{1/\alpha},
\end{equation}
where $C_{\alpha}>0$ is a constant chosen so that the function above satisfies the triangle inequality with respect to the Heisenberg group law. The corresponding Heisenberg norm ball, centered at the origin and of radius $R$ is given by
\begin{equation}\label{Hballgen}
B_R^{\alpha}(\vec{0}) := \big\{y=(\uy,\oy) \in \mathbb{R}^{2n} \textsf{ x } \mathbb{R}: |\uy|^\alpha + 16|\oy|^{\alpha/2} \leq R^\alpha\big\}.
\end{equation}
For $D= 2n+1$, define 
$$\phi_\al: {\mathbb{R}}^D \times {\mathbb{R}}^D \to {\mathbb{R}}$$
by 
\begin{equation}\label{phi}
\phi_\al(x,y) = \| x * y^{-1} \|_\alpha 
= \left( \, |\underline{x} - \underline{y}|^\alpha + C_{\alpha}|\ox -\oy + \frac{1}{2}\underline{y}^TJ \underline{x}|^{\alpha/2}  \, \right)^{1/\alpha}.
\end{equation}

It is simple to check that $\phi_\al$ satisfies the following quasihomogenity condition for $q>0$:
$$\phi_\al(
q x_1,\cdots, q x_{2n}, q^{2}x_D, 
q y_1,\cdots, q y_{2n}, q^{2}y_D )=q\phi_\al(x,y).$$ 
\vs

The Heisenberg ball of degree $\alpha$ \textbf{centered at $x\in \mathsf{H}_n$} and of radius $R$ is then given by
\begin{equation}
    \label{def hberg ball}
    B^{\alpha}_R(x):=\{y\in \mathsf{H}_n: \phi_{\alpha}(x, y)\leq R\}.
\end{equation}
Note that when $x=\vec{0}$, we recover the Heisenberg ball of degree $\alpha$ centered at the origin, as given by \eqref{Hballzero}.

We shall consider not just a single Heisenberg ball, but a family of balls centered around a truncated, non-isotropic lattice in the Heisenberg group. 
To this effect, for $Q\in \N$ large, $D\in \N$, and $c>0$ a fixed constant, 
consider the truncated lattice consisting of approximately $Q^{D+1}$ points: 
\begin{equation}\label{trun lat}
 L_{D,Q}:= \big\{(b_1, \dots, b_D) \! : \! |b_i| \in \{0,1, \dots, c\,Q  \}, i \neq D, \!\text{ and } |b_D| \in \{0,1, \dots, c\,Q^2 \} \!\big\} 
\end{equation}
$$\subset Z^D \times Z^D. $$

\subsection{Main Results}\label{main section}
Our main result is an upper bound on the average number of lattice points in a $\delta$-neighborhood of Kor\'anyi spheres with centers lying in the truncated lattice $L_{D, Q}$.

\begin{theorem}[Main result for the Kor\'anyi sphere $\alpha=4$] \label{maingeneral} 
Let $n\geq 1$ and set $D=2n+1$.
For $\de\in (0,1)$, $Q\in \N$, and $\phi_4$ as in \eqref{phi} (with $\alpha=4)$, we have
\begin{equation} \label{maineqgeneral}  
Q^{-(2n+2)}\# \{(u,v) \in  L_{D,Q}\times L_{D,Q}:   | \phi_4(u,v)- Q|\leq \delta \}
\end{equation} 
$$\lesssim 
\max \{
Q^{2n},
Q^{2n+1}\delta
\}. $$
Here the implicit constants depend only on $\phi_\alpha$ and $D$. 
\end{theorem}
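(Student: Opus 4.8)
The plan is to realize the counting quantity as an $L^2$-pairing against the generalized Radon transform $\mathcal{R}$ associated to averaging over the Korányi sphere, and then to invoke the energy/Sobolev machinery of Iosevich and the second author from \cite{IT} together with the non-vanishing Monge-Ampère determinant established in \cite{sri24} (streamlined here). Concretely, rescale so that the truncated lattice $L_{D,Q}$ becomes (essentially) $Q^{-1}\cdot(\text{a piece of the standard lattice})$ inside a fixed box; by the quasihomogeneity $\phi_4(q\,\delta_1(x), q\,\delta_1(y)) = q\,\phi_4(x,y)$ noted after \eqref{phi}, the condition $|\phi_4(u,v)-Q|\le\delta$ transforms into $|\phi_4(\tilde u,\tilde v)-1|\le \delta/Q$. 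So the left side of \eqref{maineqgeneral} is $Q^{-(2n+2)}$ times the number of pairs of rescaled lattice points $(\tilde u,\tilde v)$ in a fixed compact set with $\phi_4$ within $\delta/Q$ of $1$ — i.e.\ a lattice-point count in a $(\delta/Q)$-neighborhood of the unit Korányi sphere bundle, summed over the $\sim Q^{D+1}$ centers.

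Next I would pass from the discrete count to a continuous quantity. Smooth out each point of the rescaled lattice by a bump of width $\sim Q^{-1}$ (so the bumps are essentially disjoint and each has mass $\sim Q^{-D}$), obtaining functions $f,g$ supported in fixed boxes with $\|f\|_\infty,\|g\|_\infty \lesssim 1$ and $\|f\|_1,\|g\|_1 \lesssim 1$ (after the $Q^{-D}$ normalization is absorbed into the prefactor). The count in \eqref{maineqgeneral} is then comparable to
\[
Q^{-(2n+2)} \cdot Q^{2D} \int\!\!\!\int \chi\big(\tfrac{\phi_4(x,y)-1}{\delta/Q}\big)\, f(x)\, g(y)\, dx\, dy
\]
for a suitable cutoff $\chi$, which up to the normalization of the sphere measure $\mu$ is $\sim Q^{2D-2n-2}\cdot(\delta/Q)\cdot\langle \mathcal{R}_{\delta/Q} f, g\rangle$, where $\mathcal{R}_t$ denotes averaging a function over the $t$-thickened unit Korányi sphere translated by the (Heisenberg) group law — equivalently, an $L^1$-normalized version of the operator studied in \cite{sri24, IT}. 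The key analytic input is that the defining function $\phi_4$ of this incidence relation has non-zero rotational curvature away from the poles, and that near the poles the Heisenberg translation structure is what rescues the estimate (this is precisely the point the introduction emphasizes); thus $\mathcal{R}$ (localized) satisfies the $L^2 \to L^2_{s}$ Sobolev bound with $s=(D-1)/2$ for the non-degenerate part, which by the Iosevich–Taylor energy estimate \cite{IT} yields $\langle \mathcal{R}_t f, g\rangle \lesssim (\|f\|_2\|g\|_2 + \text{error})$ with the error controlled by $t^{-1}$ times lower-order terms.

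Then I would split into the two regimes matching the max on the right side of \eqref{maineqgeneral}. When $\delta/Q$ is large relative to the relevant threshold, the $t$-neighborhood is thick enough that the count is governed by the measure of the neighborhood itself — $\mu\big(\{|\phi_4-1|\le\delta/Q\}\big)\sim \delta/Q$ per center, giving $Q^{D+1}\cdot Q^{D-1}\cdot(\delta/Q)\cdot Q^{-(2n+2)} = Q^{2n+1}\delta$ after bookkeeping (here the two factors $Q^{D+1}$, $Q^{D-1}$ count centers and the on-sphere lattice points respectively, $D=2n+1$). When $\delta/Q$ is small, the energy estimate caps the count at the "expected main term" $Q^{2n}$, independent of $\delta$: this is where the Sobolev smoothing of the Radon transform does the work, converting what would be a singular estimate into $\|f\|_2\|g\|_2 \lesssim Q^{-n} \cdot Q^{-n}$-type control after unwinding the $L^2$ norms of the bump-sums (each $\sim Q^{D/2}\cdot Q^{-D}\cdot(\#\text{points})^{1/2}$). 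Combining the two gives $\lesssim \max\{Q^{2n}, Q^{2n+1}\delta\}$.

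The main obstacle I expect is the pole degeneracy: near the north/south poles of the Korányi sphere all principal curvatures vanish, so the Euclidean oscillatory-integral estimates fail there, and one must genuinely use that averaging over \emph{Heisenberg} translates (not Euclidean ones) twists the incidence variety enough that the Monge-Ampère determinant of $\phi_4$ stays non-zero — verifying this clean rotational-curvature statement (done in \cite{sri24}, to be re-proved compactly here) and then checking that the $\delta/Q$-thickened, group-translation-averaged operator inherits the corresponding uniform $L^2$-Sobolev bound, with constants independent of the translation parameter, is the crux. A secondary technical point is the careful comparison between the discrete lattice count and the smoothed bilinear form — ensuring the $Q^{-1}$-bumps neither over- nor under-count, and that boundary effects from the truncation of $L_{D,Q}$ contribute only to the acceptable error — but this is routine once the harmonic-analytic estimate is in hand.
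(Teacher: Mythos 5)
Your overall strategy is indeed the one the paper follows (parabolic rescaling, replacing the lattice by a smoothed measure, writing the count as a pairing with a thickened averaging operator, and trading the $L^2$-Sobolev gain from the non-vanishing rotational curvature of $\phi_4$ against an energy estimate for the measure), but there is a genuine gap at the decisive quantitative step: the smoothing scale. After rescaling, the lattice has spacing $Q^{-1}$ in the first $2n$ coordinates but $Q^{-2}$ in the last, so your isotropic bumps of width $\sim Q^{-1}$ are \emph{not} essentially disjoint: each one absorbs about $Q$ lattice points in the vertical direction, and the smoothed bilinear form is no longer comparable to the discrete count. Worse, at width equal to the lattice spacing the smoothed measure is essentially indistinguishable from Lebesgue measure on the unit box, so the pairing with the $(\delta/Q)$-thickened Radon transform can only ever return the volume term $Q^{2n+1}\delta$; your sketch has no mechanism left to produce the $Q^{2n}$ term, which is the entire content of the theorem when $\delta\lesssim Q^{-1}$. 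The paper instead thickens each rescaled lattice point by an \emph{anisotropic} box of dimensions $q^{-\tau}\times\cdots\times q^{-\tau}\times q^{-a-\tau}$ (Definition \ref{defMuQalpha}, with $Q=q^{a}$, $a=\tfrac{D}{D+1}$), where $\tau>a$ is tunable and is pushed up to $\tau_0=2a$ for $\alpha=4$ — i.e.\ boxes of size $Q^{-2}$ (and $Q^{-3}$ vertically), matched to the shell thickness and much finer than the lattice spacing — and the covering-number comparison (Lemma \ref{DecayEquivHNB}) is what makes the measure-theoretic quantity equivalent to the count.

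Relatedly, your two regimes are treated by separate heuristics, whereas in the paper the crossover $\delta\sim Q^{-1}$ is forced by the range of validity of the energy-integral bound (Proposition \ref{energy}, taken from \cite{CT, Campo}; the cruder energy lemma of \cite{IT} does not apply verbatim to this anisotropic measure): the term $I=\|\widehat{\mu_{q,\tau}}(\cdot)\langle\cdot\rangle^{-r/2}\|_2$ is controlled only for $\tau\le \frac{(D-1)a}{s-1}$ with $s=\frac{D+1}{2}$, and it is exactly this constraint, combined via Cauchy--Schwarz with the uniform $L^2\to L^2_{(D-1)/2}$ bound for the thickened operator (Propositions \ref{prop op mapping} and \ref{prop rank}), that yields $\max\{Q^{2n},Q^{2n+1}\delta\}$; also, the passage from the smoothed pairing back to the discrete count in the ``large $\delta$'' regime is not automatic and is handled in the paper by choosing $\tau'$ with $\delta=q^{a-\tau'}$ inside the same lemma. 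A smaller inaccuracy: for $\alpha=4$ the Monge--Amp\`ere determinant of $\phi_4$ is non-zero on the \emph{entire} incidence set, poles included — that is precisely what the group law buys (Proposition \ref{prop rank}(iii)) — so no separate ``rescue near the poles'' is needed; your phrasing (curvature non-zero only away from the poles, plus a pole fix) describes the Euclidean-translation situation rather than the mechanism actually used.
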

In \S\ref{ss compare}, we provide a detailed comparison of the above bound with the previously known estimates. For now, we mention that for $n=1$ and $2$, the above result saves a logarithm over the bound 
$$Q^{-(2n+2)}\# \{(u,v) \in  L_{D,Q}\times L_{D,Q}:   | \phi_4(u,v)- Q|\leq \delta \}\lesssim 
\max \{
Q^{2n}\left(\log Q\right)^{\frac{2}{n+1}},
Q^{2n+1}\delta
\},$$
implied by the main result in \cite{GNT} for lattice points on a fixed Kor\'anyi sphere centered at the origin, and matches their bound in higher dimensions (for $n\geq 3$). Further, it follows by the work of Gath \cite{GathSharp} that for $n=1$, the estimate \eqref{maineqgeneral} is the best one possible, if one approaches the problem by bounding lattice points lying on a fixed sphere and then average over the translations. 
However, Theorem \ref{maingeneral} establishes this estimate directly, using techniques which are considerably simpler than the ones employed in \cite{GNT, Gath2}. 

\vs
When $\alpha=2$, our method again gives an upper bound for the average number of lattice points near Heisenberg norm spheres which matches the one implied by the main result in \cite{GNT}. 
However, for $\alpha\geq 6$, our estimate is weaker than the one obtained from \cite{GNT} given by
$$Q^{-(2n+2)}\# \{(u,v) \in  L_{D,Q}\times L_{D,Q}:   | \phi_{\alpha}(u,v)- Q|\leq \delta \}\lesssim 
\max \{
Q^{2n},
Q^{2n+1}\delta
\},$$
although our bound asymptotically approaches the above as $n\to \infty.$

\begin{theorem}[Main result $\alpha\neq 4$] \label{maingeneral not4} 
Let 
$n\geq 1$ and $\al\geq 2 $ be integers with $\alpha\neq 4$, with $\alpha$ being even. Set $D=2n+1$.
For $\de\in (0,1)$, $Q\in \N$, and $\phi_\alpha$ as in \eqref{phi}, we have
\begin{equation} \label{maineqgeneral2}  
Q^{-(2n+2)}\# \{(u,v) \in  L_{D,Q}\times L_{D,Q}:   | \phi_\al(u,v)- Q|\leq \delta \}
\end{equation} 
$$\lesssim \begin{cases}
  \max \{
Q^{2n},
Q^{2n+1}\delta
\}, &\textrm{for } \alpha=2.\\
 \max \{
Q^{2n +\, \frac{2}{D}},
Q^{2n+1}\delta
\}, &\textrm{for } \alpha\geq 6.\\
\end{cases}
$$
Here the implicit constants depend only on $\phi_\alpha$ and $D$. 
\end{theorem}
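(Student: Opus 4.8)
The plan is to reduce the counting problem to an $L^2$-Sobolev mapping estimate for the generalized Radon transform averaging over the Heisenberg sphere $\{\phi_\alpha(x,y)=1\}$, exactly as the paper advertises in the introduction, and then to invoke the energy/Sobolev estimates of Iosevich--Taylor \cite{IT} and Campolongo--Taylor \cite{Campo, CT}. The first step is a standard discretization: rescale the lattice $L_{D,Q}$ by $\delta_{1/Q}$ so that the Koranyi sphere of radius $Q$ becomes a sphere of radius $\sim 1$, and the lattice spacing becomes $\sim 1/Q$. The count \eqref{maineqgeneral2} then becomes (up to the normalizing factor $Q^{-(2n+2)}$, which accounts for the number of lattice points in one copy of $L_{D,Q}$) an average over lattice-translated $\delta/Q$-neighborhoods of the unit Heisenberg sphere of the number of lattice points they contain. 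Writing $\chi$ for a smooth bump adapted to an $O(\delta/Q)$-neighborhood of $\{\phi_\alpha=1\}$ and using Poisson summation (or a direct geometric packing argument together with the $L^2$-boundedness framework of \cite{IT}), one bounds the average count by a main term of size $\sim Q^{2n}$ (volume of the neighborhood times number of lattice points, giving the $Q^{2n+1}\delta$ term after renormalization — wait, rather: the thickened-surface contribution) plus an error term controlled by the $L^2 \to L^2_s$ operator norm of the averaging operator $\mathcal{R}$ over the sphere.

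The second, and central, step is the curvature input. By the result recalled in the excerpt (established in \cite{sri24} for $\alpha=4$, and extended here), the Monge--Amp\`ere matrix associated to $\phi_\alpha$ has rank $\geq D-1$ for general even $\alpha\geq 2$, and full rank $D$ when $\alpha = 4$ — but observe that the two cases being proved here, $\alpha = 2$ and $\alpha \geq 6$, are precisely the non-Koranyi ones, where we only get rank $D-1$. Thus the associated Fourier integral operator has a one-dimensional degeneracy, and the sharp $L^2$-Sobolev smoothing is of order $(D-2)/2 + $ (a correction) rather than $(D-1)/2$. Quantitatively: for $\alpha=2$, the sphere $\{|\ux|^2 + 16|\ox| \le 1\}$ actually has better structure (it is essentially a paraboloid-type surface whose relevant averaging operator still behaves like the Euclidean case in the right variables), so the energy estimate of \cite{CT} gives the full strength bound $\max\{Q^{2n}, Q^{2n+1}\delta\}$; for $\alpha \ge 6$, the rank drop by one costs exactly a factor of $Q^{2/D}$ in the main term, because the missing half-derivative of smoothing, converted through the discretization $\delta \mapsto \delta/Q$ and the Sobolev embedding $L^2_{1/2} \hookrightarrow$ (counting), translates into $Q^{1/D}$ per the standard heuristic $\#\{\text{lattice pts}\} \lesssim (\text{volume}) + (\text{surface area}) \cdot Q^{-s}$ with $s$ the smoothing order; tracking constants one lands on $Q^{2n + 2/D}$. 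So the structure of the argument is: (i) discretize; (ii) quote the rank-$(D-1)$ Monge--Amp\`ere bound for $\phi_\alpha$ from the earlier section; (iii) feed this into the energy estimate machinery of \cite{IT, CT, Campo} to get the operator-norm bound; (iv) assemble the main term $+$ error term.

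I would organize it so that the $\alpha = 2$ case is handled first and separately: here one can either exploit that $\|x\|_2$-balls are affinely equivalent (after a $\ux$-dependent shear absorbing the $\frac12 \uy^\intercal J \ux$ term) to genuinely curved hypersurfaces of full rank, recovering the Euclidean-type bound directly from \cite{CT}; or note that the symbol computation gives full rank $D$ for $\alpha=2$ as well. Then the $\alpha \ge 6$ case: the key is that the Monge--Amp\`ere determinant of $\phi_\alpha$ vanishes identically (the hypersurface $\{|\ux|^\alpha + C_\alpha |\ox|^{\alpha/2} = 1\}$ has a flat direction when $\alpha \ne 2, 4$), but the rank is still $D-1$, and the localization to the region away from the poles — which for these norms is where the rank could drop further — is controlled by a dyadic decomposition in the distance to the degenerate locus, with each piece contributing a geometric series that sums to the claimed $Q^{2/D}$ loss.

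The main obstacle I anticipate is step (ii)–(iii) for $\alpha \ge 6$: verifying that the Monge--Amp\`ere rank is \emph{exactly} $D-1$ uniformly (not dropping to $D-2$ somewhere on the sphere, which would worsen the exponent beyond $2n + 2/D$), and then correctly converting that rank-$(D-1)$ Sobolev smoothing of order $(D-2+1)/2$-ish into the precise lattice-counting exponent $2n + 2/D$ via the rescaling $\delta \mapsto \delta/Q$ — the bookkeeping between the continuous smoothing order and the discrete saving is where sign/constant errors tend to creep in, and where one must be careful that the averaging is over Heisenberg (not Euclidean) translations so that the relevant operator is the \emph{left-invariant} convolution with surface measure on $\{\phi_\alpha = 1\}$, whose mapping properties are exactly what the rotational-curvature hypothesis of \cite{IT} was designed to capture. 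The $\alpha=2$ case and the assembly in step (iv) should be routine once the operator bound is in hand.
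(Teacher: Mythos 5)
Your high-level strategy (reduce the count to an $L^2\to L^2_s$ Sobolev bound for the Radon transform over $\{\phi_\alpha=1\}$, with $s$ dictated by the rank of the Monge--Amp\`ere matrix, and combine with the energy estimates of \cite{IT,CT,Campo}) is indeed the paper's philosophy, but the curvature input you assert is wrong in both cases, and it is precisely the part that has to be proved. For $\alpha=2$ the Monge--Amp\`ere determinant of $\Phi=\phi_2^2$ does \emph{not} vanish: Proposition \ref{prop rank}(ii) shows the bordered Hessian has full rank, which is exactly why $\alpha=2$ receives the same exponent as the Kor\'anyi case. Your primary framing places $\alpha=2$ in the rank-deficient class (``the two cases being proved here \dots are precisely the non-Koranyi ones, where we only get rank $D-1$''), and the correct statement appears only as an unverified hedge; the shear/affine-equivalence alternative you offer is not the mechanism, since the crucial Term-II operator bound needs the nonvanishing rotational curvature, not just the \cite{CT} energy estimate. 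For $\alpha\geq 6$ you claim the Monge--Amp\`ere determinant ``vanishes identically'' and that the danger zone is near the poles; in fact (Proposition \ref{prop rank}(iv)) the determinant is nonzero off the set where $\ox-\oy+\tfrac{1}{2}\ux^{\intercal}J\uy=0$ (the equator, not the poles), and on that set the rank drops by exactly one while the $2n\times 2n$ minor $\widetilde{M}(\Phi)$ in the $\ux,\uy$ variables stays nondegenerate --- this is what licenses the partition of unity plus slicing argument giving the uniform bound \eqref{mapping not4}, $\mathcal{T}_q:L^2\to L^2_{(D-2)/2}$. Your proposed dyadic decomposition in distance to the degenerate locus, ``with each piece contributing a geometric series that sums to the claimed $Q^{2/D}$ loss,'' is not how the loss arises: the smoothing estimate is uniform, and $Q^{2/D}$ comes purely from the exponent bookkeeping in Lemma \ref{main lemma} with $r(\alpha)$ as in \eqref{eq gam def}, $s=D-r(\alpha)=\tfrac{D+2}{2}$, $a=\tfrac{D}{D+1}$, $\tau_0=\tfrac{(D-1)a}{s-1}$.

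Beyond these misstatements, the two steps that constitute the actual proof are absent. The rank verification --- the conjugation reducing $M(\Phi)$ to $N(\Psi)$ via the group-translation structure (Lemma \ref{lem trans invar}) and the explicit inverse/determinant computations imported from \cite{sri24} --- is exactly what you flag as ``the main obstacle I anticipate,'' i.e.\ it is not carried out, and without it one cannot even decide whether the rank drops to $D-1$ or further (which, as you note, would destroy the exponent). Likewise the quantitative passage from the smoothing order to $Q^{2n+2/D}$ --- the measure $\mu_{q,\tau}$, the Cauchy--Schwarz splitting into terms $I$ and $II$, the energy bound of Proposition \ref{energy} with $t=D-r(\alpha)$, and the Littlewood--Paley argument proving Proposition \ref{prop op mapping} --- is asserted rather than derived; your own accounting wavers between a $Q^{1/D}$ and a $Q^{2/D}$ loss. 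As it stands the proposal is a road map consistent with the paper's outline, but it gets the geometry of the degeneracy wrong in both cases and omits the computations on which the theorem rests.
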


\subsection{Background and Previous Work}
In this section, we outline some related background results in the literature.  In \S \ref{ss compare}, we compare Theorems \ref{maingeneral} and \ref{maingeneral not4} to the existing work on lattice point problems in Heisenberg groups.  
\vs

\subsubsection{The Euclidean setting}\label{Euclidean} Counting integer lattice points on and near convex surfaces is a classical problem in number theory and related areas. 
The celebrated Gauss circle problem inquires about the number of lattice point within a planar disc centered at the origin of radius $t$. This number is approximated by the area of the disc, and the challenge is to estimate the error. 
We refer the reader to \cite{Hux96} and the references therein for a detailed introduction, and to \cite[\S8.8]{SandS4} for connections to oscillatory integrals. 

More generally, 
let $\Omega$ denote a convex domain in $\R^D$, and set 
$$N_\Omega(t) = card(t\Omega \cap \mathbb{Z}^D).$$
It is not hard to see that $N_\Omega(t)$ is asymptotic to $t^D vol(\Omega)$ as $t\rightarrow \infty$ and that the error term 
$$E_\Omega(t) = N_\Omega(t)-t^D vol(\Omega)$$ is $O(t^{D-1})$. 
When the boundary of $\Omega$ has suitable curvature properties, then the error term improves.  In particular, if the Fourier transform of the surface measure on $\partial \Omega$ 
exhibits decay of $\widehat{\sigma}(\xi) = 
O(\left| \xi \right|^{-\alpha})$, then 
$E_\Omega(t) = O(t^{D-1-\frac{\alpha}{d-\alpha}})$ (see
\cite[\S1.3]{ISS} and the references therein).

A general result of Lettington from \cite{L10} states that if $B\subset \R^D$, $D\geq 2$, is a symmetric convex body with a strictly
convex boundary, then
\begin{equation} \label{key} \# \{k \in {\mathbb{Z}}^D: Q \leq {||k||}_B \leq Q+\delta \} \lesssim C \max \{ Q^{D-1-\frac{\left(\frac{D-1}{2}\right)}{D-\left(\frac{D-1}{2}\right)}}, Q^{D-1} \delta \} \end{equation}
where 
$${||x||}_B=\inf \{t>0: x \in tB \}.$$  
Lettington further requires $\partial B$ to have a tangent hyper-plane at every point and that any two-dimensional cross section through the normal consists of a plane curve with continuous radius of curvature bounded away from zero and infinity. 
If the boundary of $B$ is smoother, some improvements to Lettington's bound are discussed in \cite{IT}. 

Note that the Heisenberg group falls outside of the scope of Lettington's result because the Heisenberg balls do not satisfy the geometric conditions due to local flatness near the poles. Indeed, the surface of $B_Q^{\alpha} $ has points with curvature vanishing to maximal order, meaning all principle curvatures vanish at that point; more details are found in \cite{CT}.    
\vs

In general, a more complex problem arises when we consider lattice point counting problems for surfaces with points of vanishing curvature. In addition to the Heisenberg norm spheres, this 
 includes error estimates in the case of super spheres \cite{Krtl3} and $\ell^p$-spheres \cite{Randol1}, as well as more general surfaces of rotation \cite{KN1, Nowak}.  
See also \cite{Peter1} where the effect of points of vanishing curvature on the error term or ``the lattice remainder term'' is studied in much greater generality.  

\subsubsection{The variable coefficient, multi-parameter setting} 
\label{sssec IT}
In \cite{IT}, Iosevich and the second author proved a variable-coefficient extension of Lettington's result using the theory of generalized Radon transforms. Further, they also established a multi-parameter version applicable to certain non-isotropic dilation groups. 

More precisely, let
$\phi: {\mathbb{R}}^D \times {\mathbb{R}}^D \to {\mathbb{R}}$ denote a
$C^{\lfloor \frac{D}{2} \rfloor+1}$ function 
satisfying the quasi-homogeneity condition
\begin{equation}
    \label{conditionsIT1}
    \phi(q^{\alpha_1}x_1,\cdots, q^{\alpha_D}x_D, q^{\alpha_1}y_1,..., q^{\alpha_D}y_D)=q^{\beta}\phi(x,y),
\end{equation}
where
\begin{equation}\label{conditionsIT2}
\sum_{j=1}^D\alpha_j=D,\,\, \alpha_j >0,\,\, \beta\geq 1, \text{ and }  
\max_j{\alpha_j} \le \frac{2D}{(D+1)}.
\end{equation}
Suppose that 
\begin{equation} \label{eq grad nzero} \nabla_x \phi(x,y)\neq \vec{0} \text{ and } \nabla_y \phi(x,y)\neq \vec{0} \end{equation} in a neighborhood of the sets 
$$\{x \in B: \phi(x,y)=t\}, \textrm{ and } \{y \in B: \phi(x,y)=t\},$$ where, $B$ denotes the unit ball in $\mathbb{R}^D$ with respect to the Euclidean norm.   
Further, suppose that the Monge-Ampere determinant
of $\phi$ (introduced by Phong and Stein in \cite{PhongStein}), given by
\begin{equation}
    \label{eq MongeAmp}
    \textrm{det}
    \begin{pmatrix}
        0& \left(\nabla_y\phi\right)^{\intercal}\\
        \nabla_x\phi & \frac{\partial^2\phi}{\partial_x\partial_y} 
    \end{pmatrix}
\end{equation}
does not vanish on the set $\{(x,y) \in B \times B: \phi(x,y)=t \}$ for any $t>0$.  
The main result of \cite{IT} (Theorem 1.3) then proves that
$$
q^{-D}\# \{(n,m) \in {\mathbb{Z}}^D \times {\Bbb Z}^D: \forall j, |n_j|, |m_j| \leq Cq^{\alpha_j}; | \phi(n,m)- q^{\beta}|\leq \delta \}
$$
\begin{equation}\label{mainIT}
\lesssim \max \{q^{D-2+\frac{2}{D+1}}, q^{D-\beta} \delta \}.
\end{equation}
Note that our definition of the Monge-Ampere determinant is superficially different from that used in \cite{IT} as we have switched the positions of $\nabla_x\phi$ and $\nabla_y \phi$. However, this does not change the value of the determinant. Our definition is the same as the one in the standard reference \cite{Stein}[Chapter XI, \S 3.1].
\vs

In Section \ref{sec rank calc} (proof of Proposition \ref{prop rank}), we will show that $\phi_\alpha$ in \eqref{phi} satisfies the Monge-Ampere determinant condition only when $\alpha=4$ or $\alpha=2$.
This property was used in \cite{sri24} to establish sharp $L^p$ improving estimates for the operator associated to averages over the Kor\'anyi sphere (also see \cite{OSch98}). We will also show that when $\alpha\geq 6$, the rank of the matrix in \eqref{eq MongeAmp} drops by one. The maximal rank of the Monge-Ampere matrix has consequences for the $L^2$ -Sobolev mapping property of the associated averaging operators. These properties, combined with the energy estimates established in \cite{IT} and \cite{CT}, lead to the desired upper bounds for our counting functions. 

\subsubsection{Lattice point counting on Heisenberg groups}
The Heisenberg groups are, in a way, the ``most Euclidean" examples of non-abelian, non-compact Lie groups. There is a lot of activity around proving Heisenberg analogues of results from diverse areas of Euclidean harmonic analysis. For example, a number of recent results investigate mapping properties of maximal averaging operators associated with spheres on Heisenberg groups (see \cite{sri24, JSS21, BHRT, GT} and the references therein). 

In \cite{GNT}, Garg, Nevo, and the second listed author initiated the study of a variant of the Gauss circle problem replacing Euclidean balls by the Heisenberg norm balls defined in \eqref{Hballzero}, but fixed to be centered at the origin.
Specifically, the main result in \cite{GNT} provides bounds on the error term, 
\begin{equation}\label{error}
|E_\alpha(Q)| := | |B_Q^\alpha(\vec{0}) | - N_{\alpha}(Q) |,
\end{equation}
where $N_{\alpha}(Q)$ denotes the number of lattice points in and on the ball
$$
N_{\alpha}(Q) = \left\{   (z,t) \in \Z^{2n}\times \Z:  \Vert (z,t) \Vert_{\alpha} = Q \right\},
$$ 
and
$|B_Q^\alpha(\vec{0}) |= Q^{2n+2}|B_1^{\alpha}(\vec{0})|$ denotes the volume with respect to the Haar measure on $\mathsf{H}^n$.

The best known upper bounds on $|E_{4}(Q)|$ are as follows
 \begin{align}\label{GNT main 4}
 |E_4(Q)|& \lesssim 
     \begin{cases}
 Q^{2}\log{Q} &\text{ when } n=1
 \\
 Q^{4}\log{Q}^{2/3}  &\text{ when } n= 2
 \\
 Q^{2n-\frac{2}{3}} &\text{ when }  n\geq 3,
 \\
     \end{cases} \nonumber\\
 \end{align}
where the implicit constants depends on $n$ but are independent of $Q$.
The bound on $|E_4(Q)|$ when $n\geq 3$ is due to Gath \cite{Gath2}, who significantly improved the previously known bound of $Q^{2n}\log{Q}$ from \cite[Theorem 1.1]{GNT}).
Further, these bounds are sharp when $n=1$ (see \cite{GathSharp}).

For more general $\alpha$, the best known upper bounds on $|E_{\alpha}(Q)|$ are

 \begin{align}\label{GNT main}
 |E_\alpha(Q)|& \lesssim 
     \begin{cases}
 Q^{2n} &\text{ when } \alpha=2
 \\
 Q^{2}\log{Q} &\text{ when } 4 \geq \alpha \geq 3\text{ and } n=1
 \\
 Q^{4}\log{Q}^{2/3}  &\text{ when } \alpha \geq 3 \text{ and } n= 2
 \\
 Q^{2n}  &\text{ when } \alpha \geq 3, \alpha \neq 4, \text{ and } n\geq 3
 \\
 Q^{2n-\frac{2}{3}} &\text{ when } \alpha = 4 \text{ and }   n\geq 3
 \\
  Q^{2 +\delta(\alpha)}\log{Q} &\text{ when }  \alpha >4 \text{ and } n=1
 \\
 Q^{2n} &\text{ when }
 \alpha>4 \text{ and } n\geq 3,
     \end{cases} \nonumber\\
 \end{align}
 where
$\delta(\alpha) = \frac{ 2(\alpha-4)}{3\alpha-4}$. Here, the implicit constants depends on $\alpha$ and $n$ but are independent of $Q$.
These bounds are sharp when $\alpha=2$ in all dimensions (see \cite{GNT}) and when $\alpha = 4$ and $d=1$ (see \cite{GathSharp}).

The method of bounding the error term in the lattice point counting problem in \cite{GNT} uses a hybrid of Euclidean and Heisenberg group techniques. 
The main tools are van-der-Corput lemma (from harmonic analysis), Poisson summation formula, and known asymptotic expressions for the Bessel functions. 
In \cite{GNT}, the authors dominate the lattice point count in $B_Q^{\alpha}$ from above and below by the Euclidean convolution $\chi_{B_Q^{\alpha}} \ast \rho_\epsilon$, where $\rho_\epsilon$ is a bump function adapted to Heisenberg dilations, rather than Euclidean ones.  Next, the Euclidean Poisson summation formula is applied to $\chi_{B_Q^{\alpha}} \ast \rho_\epsilon$, and the resulting product is estimated using spectral decay estimates. Estimates on the the Euclidean Fourier transform of the characteristic function of $B_Q^{\alpha}$ play a crucial role.

In that same work, the authors also compare the upper bound obtained on the error term with the lower bound that is obtained from slicing - namely by viewing their lattice point problem in each hyperplane $(\uy, \oy)$  with $\uy$ fixed separately. In the hyperplane, they apply the known results regarding the classical Euclidean sum-of-squares problem in $\Q^{2n}$.  For the first Heisenberg group (when $n=1$), slicing is inferior to the method described above (with the exception of sufficiently large $\alpha$) \cite[$\S$ 5.2.2]{GNT}. On the other hand, for the higher-dimensional Heisenberg groups ($n\geq 2$), the method of slicing is surprisingly effective and yields the same results as the technique above when $2<\alpha\le 4,$ and slicing yields superior results for large $\alpha$.  The authors point out that the slicing argument utilizes highly non-elementary results for the classical lattice point counting problem in Euclidean balls, and it is interesting to note that just using van-der-Corput lemma and Poisson summation produces the same bound when $2<\alpha\le 4$ \cite[$\S$ 5.3]{GNT}.

Note, the bounds in \eqref{GNT main} can be used to obtain
upper bounds on the number of lattice points 
near the surfaces of the Heisenberg norm balls centered at the origin, as defined in \eqref{Hballzero}.
By \cite[Proposition 5.2]{CT}, for instance, we 
have
\begin{equation}\label{Prop5.2}
 \#\big( \{ m \in \mathbb{Z}^{D} :  
Q-\delta \leq \Vert m \Vert_\alpha \leq Q+ \delta \}\big) 
 \lesssim 
 \max\{|E_\alpha(Q)|, Q^D\delta \}.
 \end{equation}
Plugging the estimates from \eqref{GNT main} into \eqref{Prop5.2} yields the best-known estimates on the number of lattice points in a $\delta$-shell about $B_Q^\alpha(\vec{0})$.

Before ending this section, we note that in \cite{CT}, Campolongo and the second listed author provide upper bounds on the number of lattice points near the surfaces of the Heisenberg norm balls directly using an alternative proof technique. 
 The presence of points of vanishing curvature on the spheres is tackled there by obtaining asymptotic bounds on the Fourier transform of supporting surface measures.

\begin{table}[h!]
    \centering
    \caption{A Comparison for Bounds on ${E}_4(Q)$}
    \label{tab:my_label}
\begin{tabular}{|l|c|c|c|}
\hline
\textbf{$n$} & $\mathbf{1}$ & $\mathbf{2}$ & $\geq \mathbf{3}$ \\ \hline
\textbf{Garg-Nevo-Taylor \cite{GNT}}   & {$Q^2\,(\log Q)$}          & {$Q^4\,(\log Q)^{\frac{2}{3}}$}            &      $Q^{2n}$      \\ \hline
\textbf{Gath \cite{Gath2}}   & -            & -            &  $Q^{2n-\frac{2}{3}}$            \\ \hline
\end{tabular}    
\end{table}

\subsection{Comparison with Previous Results}\label{ss compare}
 To put our main results into context, we give some commentary and comparisons to the known literature outlined in the previous section.  We begin by observing a simple trivial bound.
\vs
\subsubsection{The trivial bound}
For large values of $Q$, 
the left-hand-sides of \eqref{maineqgeneral} and \ref{maineqgeneral2} are bounded trivially by 
\begin{equation}\label{trivial}
\left|B_{(Q+2)}^\alpha\right| - \left|B_{(Q-2)}^\alpha \right|,\end{equation}
where 
$|\cdot |$ is used to denote the Euclidean volume, and $B_Q^\alpha$ is defined in \eqref{def hberg ball}. 
Recalling that the homogeneous dimension of $\mathsf{H}_n$ is $2n+2$, it follows that $|B_Q^\alpha|=Q^{2n+2}|B_1^\alpha|$, and so
the expression in \eqref{trivial} is bounded trivially by a constant multiple of $Q^{2n+1}$, and our bound is an improvement for all $\al$.  

\subsubsection{Results on Heisenberg Lattice Point Counting Problems}
Note that bounds on the number of lattice points near/on dilates of a fixed sphere centered at the origin can be leveraged to obtain bounds on the average number of lattice points over translates of the sphere. To see this, for $Q>0$, define $\mathcal{E}_{\alpha}^{\textrm{avg}}(Q)$ to be the smallest number such that the following inequality holds true
\begin{equation}
    \label{eq err avg}
    Q^{-(2n+2)}\# \{(u,v) \in  L_{D,Q}\times L_{D,Q}:   | \phi_\al(u,v)- Q|\leq \delta \}\lesssim \max\{\mathcal{E}_{\alpha}^{\textrm{avg}}(Q), Q^{D}\delta\}.
\end{equation}
We first apply triangle inequality and then use translation invariance of the lattice point counting problem to estimate
\begin{align*}
& Q^{-(2n+2)}\# \{(u,v) \in  L_{D,Q}\times L_{D,Q}:   | \phi_\al(u,v)- Q|\leq \delta \}\\  &\leq   Q^{-(2n+2)} \sum_{u\in L_{D,Q}} \# \{v\in  L_{D,Q}:   | \phi_\al(u,v)- Q|\leq \delta \}\\
&= Q^{-(2n+2)} \sum_{u\in L_{D,Q}} \# \{v\in  L_{D,Q}:   | \phi_\al(u*v^{-1},\vec{0})- Q|\leq \delta \}.
\end{align*}
Recalling that $\phi_\al(u*v^{-1},\vec{0})=\|u*v^{-1}\|_{\alpha}$, the expression above is equal to
\begin{align*}
& Q^{-(2n+2)} \sum_{u\in L_{D,Q}} \# \{v\in  L_{D,Q}: Q-\delta\leq \|u*v^{-1}\|_{\alpha} \leq Q+\delta \}\\
&\lesssim Q^{-(2n+2)}Q^{(2n+2)} \max_{u\in L_{D,Q}}\# \{v\in  L_{D,Q}: Q-\delta\leq \|u*v^{-1}\|_{\alpha} \leq Q+\delta \}.
\end{align*}

We can now apply \eqref{Prop5.2} to bound the last term above and conclude that
\begin{multline*}
 \label{eq err prev}
    Q^{-(2n+2)}\# \{(u,v) \in  L_{D,Q}\times L_{D,Q}:   | \phi_\al(u,v)- Q|\leq \delta \}\lesssim 
    Q^{D}\delta+\min \{|E_{\alpha}(Q)|, \mathcal{E}^0_{\alpha}(Q)\}.  
\end{multline*}
In other words, 
\begin{equation}
    \label{eq dom avg fixed}
    \mathcal{E}_{\alpha}^{\textrm{avg}}(Q)\leq \min \{|E_{\alpha}(Q)|, \mathcal{E}^0_{\alpha}(Q)\},
\end{equation}
where $E_\alpha(Q)$ and $\mathcal{E}_{\alpha}^{0}(Q)$ is defined in \eqref{error}. 
\smallskip

Now, if we use Theorem \ref{maingeneral} and Theorem \ref{maingeneral not4} to bound the term on the left hand side in \eqref{eq err avg}, we obtain the upper bound
\begin{equation}
    \label{eq err mainthm}
    \mathcal{E}_{\alpha}^{\textrm{avg}}(Q)\leq 
    \begin{cases}
        Q^{2n}, & \textrm{ for } \alpha=2 \textrm{ and } 4.\\
        Q^{2n+\frac{2}{D}}, & \textrm{ for } \alpha\geq 6.
    \end{cases}
\end{equation}

\begin{table}[h!]
    \centering
    \caption{A Comparison for Bounds on $\mathcal{E}^{\textrm{avg}}_4(Q)$}
    \label{tbl eavg comp}
\begin{tabular}{|l|c|c|c|}
\hline
\textbf{$n$} & $\mathbf{1}$ & $\mathbf{2}$ & $\geq \mathbf{3}$ \\ \hline
\textbf{Garg-Nevo-Taylor \cite{GNT}}   & {$Q^2\,(\log Q)$}          & {$Q^4\,(\log Q)^{\frac{2}{3}}$}            &      $Q^{2n}$      \\ \hline
\textbf{Gath \cite{Gath2}}   & -            & -            &  $Q^{2n-\frac{2}{3}}$            \\ \hline
\textbf{Campolongo-Taylor \cite{CT}}   & $Q^{2+\frac{2}{3}}$           & $Q^{4+\frac{6}{7}}$            & $Q^{2n+\frac{4n-2}{4n+1}}$            \\ \hline
\textbf{Theorem \ref{maingeneral}}   & $Q^2$           & $Q^4$          & $Q^{2n}$           \\ \hline
\end{tabular}    
\end{table}

A few remarks are in order to compare our results, Theorems \ref{maingeneral} and \ref{maingeneral not4}, with the ones described above.

\begin{remark}
    Theorem \ref{maingeneral}, for $\alpha=4$, shall be established following the same philosophy as the variable coefficient lattice point counting estimate in \cite{IT} (see \S \ref{sssec IT}). Indeed, the verification of the homogeneity conditions \eqref{conditionsIT1}- \eqref{conditionsIT2} is straightforward. The main work lies in establishing that the Monge-Ampere determinant of $\phi_{4}$, as defined in \eqref{eq MongeAmp}, does not vanish away from the origin. The fact that the definition of $\phi_{4}$ incorporates the Heisenberg group law (manifested in the presence of the bilinear form $J$ in \eqref{phi}) into account, is crucial to this proof. This presence of ``rotational curvature" has been established before in the first author's work \cite{sri24}, and in the unpublished thesis \cite{OSch98} of O. Schmidt. In the former, it was used to establish optimal $L^p$ improving estimates for averaging operators associated to the Kor\'anyi spheres. However, to the best of our knowledge, the current work is the first to leverage this observation to derive estimates on lattice points near translates of the Kor\'anyi sphere. The work \cite{IT} did not consider application to lattice point counting problems on Heisenberg groups. On the other hand, works like \cite{GNT, Gath2, CT} which deal with these problems, have not investigated the effect of group translations on the estimates for the counting function. In this way, Theorem \ref{maingeneral} is a true analogue of the Euclidean lattice counting problems in the Heisenberg setting.
\end{remark}

\begin{remark}
    In contrast, if one considers lattice points near Heisenberg spheres translated with respect to the Euclidean group law, the Monge-Ampere determinant of the corresponding describing function does vanishes at points away from the origin. Indeed, in this case, the matrix in \eqref{eq MongeAmp} can have rank zero, which is a consequence of the fact that the Kor\'anyi sphere centered at the origin has zero Gaussian curvature at the poles. This is reason why Theorem \ref{maingeneral} yields better estimates for $\mathcal{E}_{\alpha}^{\textrm{avg}}(Q)$ than the main result of Theorem \cite{CT}, where the authors spend considerable effort to overcome this ``lack of curvature" (see Table \ref{tbl eavg comp}). 
\end{remark}

\begin{remark}
    Our method also allows us to save a logarithm over the bounds on $\mathcal{E}_{4}^{\textrm{avg}}(Q)$ implied by the main result in \cite{GNT} for $n=1$ and $2$, and matches their bound in higher dimensions (for $n\geq 3$). Note that by the work of Gath \cite{GathSharp}, we know that in the case when $n=1$, we have $|{E}_{4}(Q)|\gtrsim Q^{2}$. 
    Thus a sharp estimate for $|{E}_{4}(Q)|$ would lead to the bound
    $$\mathcal{E}_4^{\textrm{avg}}(Q)\lesssim Q^2,$$
    which is what Theorem \ref{maingeneral} establishes directly. However, our method is considerably simpler than the techniques employed in \cite{GNT, Gath2}.
\end{remark}

\begin{remark}
When $\alpha\neq 4$, our Theorem \ref{maingeneral not4} can be thought of as an extension 
of the main result in \cite{IT}, for functions $\phi_{\alpha}$ with vanishing Monge-Ampere determinant, but for which the matrix in \eqref{eq MongeAmp} still has non-zero rank. Indeed, we show that in these cases, the said matrix loses rank by one. Correspondingly, the underlying generalized Radon transform has a slightly worse Sobolev mapping property, which is reflected in the sub-optimal bound on the error term $E_{\alpha}^{\textrm{avg}}(Q)$, as compared to the estimates in \cite{GNT} for $\alpha\neq 4$, which uses a deeper analysis involving Bessel functions. However, it might be possible to improve Theorem \ref{maingeneral not4} in certain regimes by exploring the finer mapping properties of the canonical relation associated to the underlying oscillatory integral operator. 
\end{remark}


\subsection{Structure of the Paper}
In Section \ref{sec outline pf}, the proof of the main theorems is reduced to bounding the $L^2$ norms of two terms, the first resembling an energy integral and the second an averaging operator over Heisenberg spheres, acting on a measure supported on a scaled, truncated lattice. The first term is handled in Section \ref{boundI} using pre-established energy estimates from \cite{IT, CT, Campo}. The second term is estimated in Section \ref{sec boundII}, by establishing the precise $L^2$ Sobolev mapping property of the underlying averaging operator. This depends on the rank of the associated Monge-Ampere matrix, which is calculated in Section \ref{sec rank calc} following the arguments in \cite{sri24}.

\subsection{Notation}
By $A\lesssim B$ we shall mean that $A\le C\cdot B,$ where $C$ is a positive constant and $A\sim B$ shall signify that $A\lesssim B$ and $B\lesssim A$. $A\lesssim_D B$ shall mean that $A\leq C\cdot B$ with the positive constant $C$ depending on the parameter $D$.

\subsection{Acknowledgement}
R.S. is
supported by the Deutsche Forschungsgemeinschaft (DFG, German Research Foundation) under Germany’s Excellence Strategy 
- EXC-2047/1 - 390685813, by SFB 1060 and by an Argelander grant of the University of Bonn. K.T. is supported in part by the Simons Foundation Grant GR137264. Both of them thank Professor Allan Greenleaf for helpful discussions.

\section{Proof of Theorems \ref{maingeneral} and \ref{maingeneral not4}}
\label{sec outline pf}
In this section, we use a scaling argument to transform our lattice point counting problem into a measure-theoretic estimate.  Using the Fourier inversion formula, we re-write and bound the arising quantity by two terms, $I$ and $II$.  The first term resembles an energy integral, a well-known notion in the field of geometric measure theory, and is handled using pre-established bounds from \cite{IT} (when $\alpha=4$) and from \cite{CT} (when $\alpha\neq 4$). To bound the second term, we need to establish the Sobolev mapping property $L^2\to L^2_s$ of an underlying averaging operator for a precise value of $s$. This depends on the rank of its Monge-Ampere matrix, which is calculated in Section \ref{sec rank calc} following the arguments in \cite{sri24}[Section 6].

\subsection{Main Counting Lemma}
We use the notation of Section \ref{main section}.  
In particular, recall 
$n\geq 1$ and $\al\geq 2 $ are integers, with $\alpha$ being even, $D=2n+1$, 
$\phi_\alpha$ is the function in \eqref{phi}, and $L_{D,Q}$ is the truncated lattice defined in \eqref{trun lat}. To treat both Theorems \ref{maingeneral} and \ref{maingeneral not4} simultaneously, we introduce the parameter
\begin{equation}
    \label{eq gam def}
    r=r(\alpha):=\begin{cases}
        \frac{D-1}{2} & \text{ for } \alpha=2 \textrm{ and } 4.\\
        \frac{D-2}{2} & \text{ for } \alpha\geq 6.
    \end{cases}
\end{equation}

We rely on the following construction, which is similar to that in \cite{CT} and \cite{IT}.  
Recall $D=2n+1$ and consider the truncated lattice:
\begin{equation}\label{lattice}
 L_{D,q^a}:= \big\{(b_1, \dots, b_D) \! : \! b_i \in \{0,1, \dots, \lceil q^a \rceil \}, i \neq D, \!\text{ and } b_D \in \{0,1, \dots, \lceil q^{2a} \rceil \} \!\big\},
\end{equation}
where $a$ is chosen so that $L_{D,q^a}$ consists of $q^D$ lattice points; namely, 
$a(D-1) + 2a = D$ or 
$$a := \frac{D}{D+1}.$$

 With this set up in place, we prove the following result from which the main theorems follow. 
\vs
\begin{lemma}[Main Counting Lemma]\label{main lemma}
Let $q\geq1$ and $a= \frac{D}{D+1}$. 
For $r(\al)$ be as in \eqref{eq gam def}, set
\begin{equation}
    \label{eq s def}
    s:=D-r(\al)=\begin{cases}
        \frac{D+1}{2} & \text{ for } \alpha=2 \textrm{ and } 4.\\
        \frac{D+2}{2} & \text{ for } \alpha\geq 6.
    \end{cases}.
\end{equation}
Let $\tau$ be a tunable parameter satisfying 
$\tau \in (a, \frac{(D-1)a}{s-1}]$. 
Then, 
\begin{equation} \label{maineq}  
q^{-D}\# \{(n,m) \in L_{D,q^a}\times L_{D,q^a}:  | \phi_\al(n,m)- q^{a}|\leq q^{a-\tau} \}
\end{equation} 
$$\lesssim 
q^{D-\tau}, $$
with the implicit constant depend only on $D$ and the function 
$\phi_\alpha$.
\end{lemma}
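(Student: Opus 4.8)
The plan is to realize the counting function as an $L^2$ pairing against a measure and decompose it into the two pieces advertised in the introduction. First I would introduce the scaled, truncated lattice measure: let $\nu_q$ be the uniform probability measure on the $q^D$ points of $q^{-a}L_{D,q^a}$ (so these points land in a fixed bounded region of $\mathbb{R}^D$, and the quasi-homogeneity \eqref{phi} turns the condition $|\phi_\alpha(n,m)-q^a|\le q^{a-\tau}$ into $|\phi_\alpha(\tilde n,\tilde m)-1|\le q^{-\tau}$ for the rescaled points $\tilde n,\tilde m$). The left-hand side of \eqref{maineq} is then, up to a constant, $q^D$ times the $\nu_q\times\nu_q$-measure of a $q^{-\tau}$-neighborhood of the incidence variety $\{\phi_\alpha=1\}$. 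Writing $\chi$ for a smooth bump $\gtrsim \mathbbm{1}_{[-1,1]}$ and using a Littlewood--Paley / Fourier-inversion decomposition in the defining-function variable (as in \cite{IT} and \cite{CT}), one expresses this measure as an integral against the Schwartz tails of $\widehat{\chi}$ composed with the generalized Radon transform $R$ that averages over $\{\phi_\alpha(x,\cdot)=t\}$. Concretely I would arrive at a bound of the shape
\begin{equation*}
q^{-D}\#\{\cdots\}\ \lesssim\ \delta\cdot q^{D-\tau}\cdot\big(\text{trivial/main term}\big)\ +\ \text{(dyadic sum over frequencies }2^j\text{)},
\end{equation*}
where the $q^D\delta=q^{D-\tau}$ factor is exactly the volume term appearing in the statement, and the remaining dyadic sum is what must be controlled by harmonic analysis.

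Next I would split the dyadic sum at the frequency scale $2^j\sim q^{\tau}$ (equivalently, at the reciprocal of the neighborhood width), producing the two terms $I$ and $II$ referenced in the paper's outline. The term $I$ collects the low frequencies and, after Cauchy--Schwarz, is dominated by an energy integral $\iint |x-y|^{-\gamma}\,d\nu_q(x)\,d\nu_q(y)$-type quantity for an appropriate exponent tied to $r(\alpha)$; this is precisely where the energy estimates of \cite{IT} (for $\alpha=4$) and \cite{CT}, \cite{Campo} (for $\alpha\neq 4$) are invoked as a black box, and they are what fixes the admissible range $\tau\in(a,\frac{(D-1)a}{s-1}]$ and the exponent $r(\alpha)=\tfrac{D-1}{2}$ resp.\ $\tfrac{D-2}{2}$. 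The term $II$ collects the high frequencies and is handled by pairing $R$ against $\nu_q$ and using the $L^2\to L^2_s$ Sobolev boundedness of the averaging operator with $s=D-r(\alpha)$ as in \eqref{eq s def}; this mapping property is established (via the Monge--Ampere rank computation of Section \ref{sec rank calc}, following \cite{sri24}) later in the paper, so here it is used as an assumed input. One then sums the dyadic contributions: the geometric series in $j$ converges because $s$ is strictly larger than the critical exponent, and the total is $\lesssim q^{D-\tau}$, matching the claimed bound.

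The bookkeeping I expect to be delicate: one must verify that the lattice measure $\nu_q$ genuinely behaves, at frequencies up to $\sim q^{a\cdot\text{(something)}}$, like the Lebesgue measure on the unit cube (i.e.\ its Fourier coefficients away from the dual lattice are negligible), so that the continuous energy and Sobolev estimates transfer to the discrete sum with only the stated loss; this is the standard but technical ``lattice $\to$ measure'' comparison, and choosing $a=\frac{D}{D+1}$ is exactly what balances the two competing errors. The genuinely substantive obstacle, however, is not in this lemma at all but is quarantined to the later sections it cites: the energy estimates from \cite{IT, CT, Campo} and, above all, the $L^2$-Sobolev bound for the Kor\'anyi (and general Heisenberg) averaging operator, which rests on the non-vanishing — or rank-drop-by-one — of the Monge--Ampere determinant computed in Section \ref{sec rank calc}. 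Granting those, the proof of Lemma \ref{main lemma} is the routine assembly sketched above: Fourier inversion, dyadic splitting at scale $q^\tau$, Cauchy--Schwarz, apply energy estimate to $I$ and the Sobolev estimate to $II$, sum the geometric series.
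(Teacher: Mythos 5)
Your overall architecture does track the paper's: rescale the truncated lattice into the unit box, turn the count into the pair-measure of the $q^{-\tau}$-shell around $\{\phi_\alpha=1\}$, and bound that by an energy-type term together with a term controlled by the $L^2$-Sobolev regularity of the averaging operator, quoting the energy estimates of \cite{IT,CT,Campo} and the Monge--Amp\`ere rank computation as inputs. However, two steps as you have written them would fail. First, you take $\nu_q$ to be the \emph{atomic} uniform measure on the rescaled lattice points. For such a measure both the energy integral $\iint |x-y|^{-(D-r(\alpha))}\,d\nu_q\,d\nu_q$ (the diagonal contributes infinitely) and the weighted Fourier norm $\|\widehat{\nu_q}(\cdot)\langle\cdot\rangle^{-r(\alpha)/2}\|_2$ (no decay of $\widehat{\nu_q}$, and $r(\alpha)<D$) are infinite, so term $I$ cannot be bounded. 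The paper's remedy is not ``bookkeeping'': each lattice point is replaced by an anisotropic box of dimensions $q^{-\tau}\times\cdots\times q^{-\tau}\times q^{-a-\tau}$, matching the rescaled shell width and the parabolic scaling, and the smoothed probability measure $\mu_{q,\tau}$ of Definition~\ref{defMuQalpha} is used; Lemma~\ref{DecayEquivHNB} is exactly the statement that the count is still comparable to the thickened measure of the shell, and these specific box dimensions are what the energy bound of Proposition~\ref{energy} is proved for.

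Second, your Sobolev input is misstated: you invoke ``$L^2\to L^2_s$ boundedness of the averaging operator with $s=D-r(\alpha)$.'' For $\alpha=4$ this would be a gain of $\frac{D+1}{2}$ derivatives, which exceeds the optimal gain $\frac{D-1}{2}$ for any averaging operator over a hypersurface and is false. The correct input, and what the rank computation of Section~\ref{sec rank calc} yields, is $\mathcal{T}_q:L^2\to L^2_{r(\alpha)}$ as in \eqref{mapping comb}; the exponent $s=D-r(\alpha)$ from \eqref{eq s def} is the exponent of the energy kernel $|x-y|^{-s}$ in term $I$, not a smoothing order. Accordingly the paper does not split frequencies at $2^j\sim q^{\tau}$ and sum a geometric series; it performs a single weighted Cauchy--Schwarz \eqref{heaven}, distributing the $r(\alpha)$ derivatives as $\langle\xi\rangle^{\mp r(\alpha)/2}$ between the two copies of $\mu_{q,\tau}$ (Littlewood--Paley appears only later, inside the proof of Proposition~\ref{prop op mapping}, to apply the Sobolev bound to a measure of finite negative-order norm). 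With the gain mislabeled as $s$, your claim that ``the geometric series converges because $s$ is strictly larger than the critical exponent'' has no force, and the admissible range $\tau\in(a,\frac{(D-1)a}{s-1}]$ would not come out of your bookkeeping. These two points need to be repaired before the sketch becomes a proof; once they are, the assembly is indeed the routine one you describe.
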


As the parameter $\tau$ increases, the neighborhood about the surface decreases, and we obtain a more precise estimate for the number of lattice points on the surface.  The largest choice of $\tau$ permitted by Lemma \ref{main lemma} arises in the cases $\alpha=4$ and $\alpha=2$. Observe that counting lattice points in the thickened surface, as on the left-hand-side of \eqref{maineq}, is equivalent to further ``thickening'' the lattice. So, we
consider the $q^{a-\tau}$-neighborhood of $L_{D,q^a}$ for $\tau>a$.
Denote this thickened lattice set by 
$$\left(L_{D,q^a}\right)_{q^{a-\tau}} = \{x\in \R^D: |x-u| < q^{a-\tau} \text{ for some } u \in L_{D,q^a} \}.$$
For visualization, we replace each $(q^{a-\tau})$-\textit{ball} by a $(q^{a-\tau})$-\textit{box} with the same center (see Figure \ref{latticeScale}).

\subsubsection{Proof of Theorems \ref{maingeneral} and \ref{maingeneral not4} assuming Lemma \ref{main lemma}}
To see that the main results, Theorems \ref{maingeneral} and \ref{maingeneral not4}, follow from Lemma \ref{main lemma}, let $\delta\in (0,1)$, set $\tau_0 = \frac{(D-1)a}{s-1}$, 
and consider two cases: either $\delta<q^{a-\tau_0}$ or  $\delta\geq q^{a-\tau_0}$. 
 Since $\tau \in (a, \tau_0]$, the smallest that $q^{a-\tau}$ can be is $q^{a- \tau_0}$. 
\vs 

If $\delta < q^{a- \tau_0}$, then Lemma \ref{main lemma} implies that 
\begin{equation}\label{est1}
q^{-D}\# \{(n,m) \in L_{D,q^a}\times L_{D,q^a}:  | \phi_\al(n,m)- q^{a}|\leq \delta \}
\lesssim q^{D-\tau_0}.
\end{equation}
\vskip.125in

If $ \delta \geq  q^{a- \tau_0}$, then we can choose $\tau' \in (a, \tau_0]$ so that $\delta =  q^{a-\tau'}$, and Lemma \ref{main lemma} implies that 
\begin{equation}\label{est2}
q^{-D}\# \{(n,m) \in L_{D,q^a}\times L_{D,q^a}:  | \phi_\al(n,m)- q^{a}|\leq q^{a-\tau'} \}
\lesssim q^{D-\tau'} \leq q^{D-a}\delta.\end{equation}
\vskip.125in

Putting these estimates \eqref{est1} and \eqref{est2} together, we conclude that, for $\de\in (0,1)$,
\begin{equation}  
q^{-D}\# \{(n,m) \in L_{D,q^a}\times L_{D,q^a}:  | \phi_\al(n,m)- q^{a}|\leq \delta \}
 \lesssim \max  \{
  q^{ D - \tau_0 }, 
 q^{D-a} \delta 
 \}. 
 \end{equation}
 \vskip.125in

Finally, setting $Q=q^a$, and recalling that $a=\frac{D}{D+1}$ and $\tau_0 = \frac{(D-1)a}{s-1}$, Theorem \ref{maingeneral} follows from a change of variables. 
It remains to establish Lemma \ref{main lemma}. 

\subsection{Proof of Main Counting Lemma \ref{main lemma}} 
\label{reductSec}
The lemma is proved in several steps.  First, matters are reduced to proving a measuring lemma, mainly Lemma \ref{measuring lemma}. Next, Lemma \ref{measuring lemma} is bounded using elementary properties of the Fourier transform and the Cauchy-Schwartz inequality by a product of two integrals.  These integrals, which we call $I$ and $II$, are handled in Sections \ref{boundI} and \ref{sec boundII} respectively. 
\smallskip

We can re-write the left-hand side of \eqref{maineq} as follows.
Let $N_C(S, \rho)$ be the number of cubes of side-length $\rho>0$ required to cover a set $S$. 
Then

\begin{align}
\label{MeasFSsim0}
& \# \left(\big\{(n,m) \in \mathbb{Z}^D \mathsf{x} \ \mathbb{Z}^D : \Vert n \Vert_{\alpha}, \Vert m \Vert_{\alpha} \!\leq Cq^a, \abs{\phi_\al(n, m) - q^a} \leq q^{a-\tau} \big\}\right)
\\
&\sim N_C \left(\big\{(u, v) \in
\left(L_{D,q^a}\right)_{q^{a-\tau}}  \ \mathsf{x} \ \left(L_{D,q^a}\right)_{q^{a-\tau}} 
 :  \abs{ \phi_\al(u,v)  - q^a} \leq q^{a-\tau} \}, q^{a-\tau}\right).
\label{MeasFSsim1}
\end{align}
\vs

Next, scaling $L_{D,q^a}$ down into the unit box, we set
\begin{equation}\label{EqDef}
E_q:= \bigcup_{(b_1, \dots, b_D) \in L_{D,q^a}} \left\{ R_\tau + \left(\frac{b_1}{q^a}, \dots, \frac{b_{2n}}{q^a}, \frac{b_D}{q^{2a}}\right)\right\},
\end{equation}
where $R_\tau$ denotes the $q^{-\tau} \ \mathsf{x} \cdots \mathsf{x} \ q^{-\tau} \ \mathsf{x} \ q^{-a-\tau}$-rectangular box centered at the origin in $\mathbb{R}^{D}$. 
Note $E_q$ is composed of approximately $q^D$ such rectangles (see Figure \ref{latticeScale}).  
\begin{figure}[ht]
  \centering
  \includegraphics[width=\linewidth]{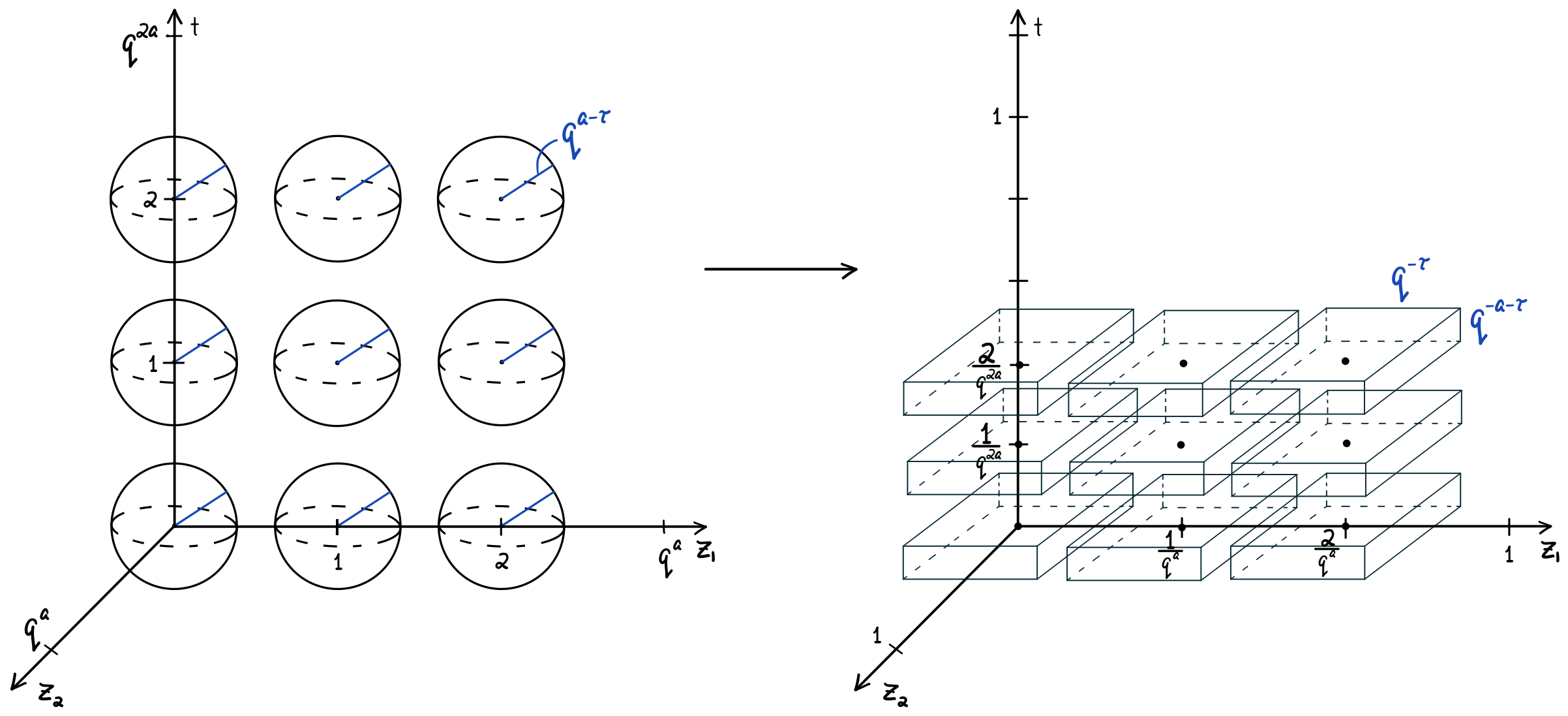}
\caption{The sets $\tau_q^a(E_q)$ (left) and the scaling to $E_q$ (right), where $\tau_q^a$ is the scaling operation in \eqref{scalingop}.}\label{latticeScale}
\end{figure}

Denote the volume of $R_\tau$ by $V_R$ and observe that
\begin{equation} \label{volRect}
V_R := vol(R_\tau) = (q^{-\tau})^{D-1}q^{a-\tau}=q^{-D\tau-a}.
\end{equation}
\vs

We now define a probability measure, $\mu_{q,\tau} = \mu_{q,\tau}$, on $E_q$.  While the definition of $\mu_{q,\tau}$ appears quite complicated, it is simply a normalized and smoothed version of the indicator function of the set $E_q$.
\begin{definition}[Probability measure on $E_q$]\label{defMuQalpha}
For $x \in \mathbb{Z}^{2n+1}=\mathbb{Z}^{D}$ and $\tau>a$, we define $\mu_{q,\tau}$ 
by:
\begin{equation}
\label{def measure}
\mu_{q,\tau}(x) \! := 
\frac{1}{\abs{E_q}} \sum_{b \in \mathbb{Z}^D} \left[\prod_{i=1}^{2n} \psi_0\!\left(\frac{b_i}{q^a}\right)  
\psi_0\!\left(\!q^{\tau}\! \left(x_i-\frac{b_i}{q^a}\right)\!\right)\right] \! \psi_0\!\left(\frac{b_D}{q^{2a}}\right) \psi_0\!\left(\!q^{a+\tau}\! \left(x_D-\frac{b_D}{q^{2a}}\right)\!\right) \!,
\end{equation}
where $\psi_0$ is a bump function supported on the unit ball. 
For $A\subset E_q$, set 
$\int_A d\mu_{q,\tau}(x) = \int_A \mu_{q,\tau}(x) dx.$ 
\end{definition}

The following lemma transforms the
lattice point counting problem to one of bounding the $
\left(\mu_{q,\tau}\times \mu_{q,\tau}\right)$-measure of a scaled-down version of the set.

\begin{lemma}[transforming the lattice point counting problem to a measure-theoretic bound]\label{DecayEquivHNB}
\begin{align}
q^{-D}\#\big(\big\{ (n,&m) \in \mathbb{Z}^{D} \ \mathsf{x} \ \mathbb{Z}^{D} : \Vert n \Vert_\alpha, \Vert m \Vert_\alpha \leq Cq^a, \abs{\phi_\al(n, m) - q^a} \leq q^{a-\tau} \big\}\big) \nonumber\\
&\sim q^D \mu_{q,\tau}  \ \mathsf{x} \ \mu_{q,\tau} \left( \big\{ (x,y) \in E_q \ \mathsf{x} \ E_q : \abs{\phi_\al(x,y) - 1} \leq q^{-\tau} \big\} \right) \label{measFS}
\end{align}
\end{lemma}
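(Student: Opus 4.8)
The plan is to establish the equivalence \eqref{measFS} by a chain of elementary reductions, tracking powers of $q$ at each step. First I would rewrite the left-hand side using the covering-number reformulation \eqref{MeasFSsim0}--\eqref{MeasFSsim1}: counting lattice pairs $(n,m)$ with $\Vert n\Vert_\alpha,\Vert m\Vert_\alpha\leq Cq^a$ and $|\phi_\al(n,m)-q^a|\leq q^{a-\tau}$ is comparable (up to constants depending only on $D$) to the number $N_C$ of $q^{a-\tau}$-boxes needed to cover the thickened set $(L_{D,q^a})_{q^{a-\tau}}\times(L_{D,q^a})_{q^{a-\tau}}$ intersected with the $q^{a-\tau}$-shell $\{|\phi_\al(u,v)-q^a|\leq q^{a-\tau}\}$; this holds because $\phi_\al$ is Lipschitz on the relevant bounded region, so moving from a lattice point by at most $q^{a-\tau}$ changes $\phi_\al$ by a comparable amount, and the number of covering boxes through each lattice point is bounded above and below by absolute constants.

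Next I would apply the anisotropic scaling map
$$
\tau_q^a : (x_1,\dots,x_{2n},x_D)\mapsto \Bigl(\tfrac{x_1}{q^a},\dots,\tfrac{x_{2n}}{q^a},\tfrac{x_D}{q^{2a}}\Bigr)
$$
which sends $(L_{D,q^a})_{q^{a-\tau}}$ onto $E_q$ (each $q^{a-\tau}$-ball around a lattice point maps into the box $R_\tau$ of dimensions $q^{-\tau}\times\cdots\times q^{-\tau}\times q^{-a-\tau}$, up to harmless constants). By the quasihomogeneity $\phi_\al(q^a x_1,\dots,q^{2a}x_D, q^a y_1,\dots,q^{2a}y_D)=q^a\phi_\al(x,y)$, the condition $|\phi_\al(u,v)-q^a|\leq q^{a-\tau}$ transforms exactly into $|\phi_\al(x,y)-1|\leq q^{-\tau}$ under this rescaling. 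Thus the covering number $N_C$ of the unscaled set equals (up to constants) the covering number of the rescaled set $\{(x,y)\in E_q\times E_q : |\phi_\al(x,y)-1|\leq q^{-\tau}\}$ by boxes of the appropriately scaled side-lengths.

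Finally I would convert this covering number into the measure $\mu_{q,\tau}\times\mu_{q,\tau}$ of the rescaled set. Here the key point is that $\mu_{q,\tau}$ as defined in \eqref{def measure} is, up to constants, the normalized indicator of $E_q$: on each of the $\sim q^D$ rectangles $R_\tau$ composing $E_q$ (there are $\sim q^D$ since $L_{D,q^a}$ has $\sim q^D$ points, the $\psi_0(b_i/q^a)$, $\psi_0(b_D/q^{2a})$ factors just cutting off the sum to $b\in L_{D,q^a}$), the remaining factors $\psi_0(q^\tau(x_i - b_i/q^a))\,\psi_0(q^{a+\tau}(x_D - b_D/q^{2a}))$ are bump functions of total mass $\sim V_R=q^{-D\tau-a}$, and $|E_q|\sim q^D V_R = q^{-D\tau-a+D}$. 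Hence $\mu_{q,\tau}\times\mu_{q,\tau}$ of a set that is a union of $R_\tau\times R_\tau$-rectangles is $\sim (\text{number of such rectangles})\cdot V_R^2/|E_q|^2$, while the covering number of that set by $R_\tau\times R_\tau$-boxes is $\sim$ the same number of rectangles; putting $|E_q|^2/V_R^2 \sim q^{2D}$ and dividing by $q^D$ gives precisely the claimed relation. I expect the main obstacle to be the bookkeeping at the \emph{boundary} of the shell: the set $\{|\phi_\al(x,y)-1|\leq q^{-\tau}\}$ does not exactly decompose into whole rectangles $R_\tau\times R_\tau$, so one must argue that the boundary rectangles (those only partially inside the shell) contribute at most a constant-factor discrepancy — this follows from the non-degeneracy of $\nabla\phi_\al$ near the shell (so the shell has the expected ``thickness'' $\sim q^{-\tau}$ comparable to the box size in the transverse direction) together with the quasihomogeneity, but it is the step requiring the most care, and is exactly the place where the constructions of \cite{CT} and \cite{IT} are being adapted.
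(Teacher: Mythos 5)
Your plan follows the paper's own proof essentially step for step: the same covering-number reformulation \eqref{MeasFSsim0}--\eqref{MeasFSsim1}, the same anisotropic rescaling (the paper's $\tau_q^a$, which you write as its inverse but use consistently) combined with the quasihomogeneity of $\phi_\al$ to turn the $q^{a-\tau}$-shell at height $q^a$ into the $q^{-\tau}$-shell at height $1$, and the same normalization bookkeeping $\mu_{q,\tau}\times\mu_{q,\tau}(R_\tau\times R_\tau)\sim q^{-2D}$ to convert the rectangle-covering number into the product measure. The only difference is that you spell out justifications (Lipschitz control of $\phi_\al$, bounded overlap, boundary rectangles) that the paper absorbs into the symbol $\sim$; the argument is correct.
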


\begin{proof}
Let $N_C(S, \rho)$ be the number of cubes of side-length $\rho$ and $N_R(S,\rho_1, \rho_2)$ be the number of $\rho_1  \ \mathsf{x} \cdots \mathsf{x} \ \rho_1  \ \mathsf{x}  \ \rho_2$-rectangular boxes required to cover a bounded set $S\subset \R^D$. 
Defining 
$$\tau_q^{a}: \R^D \rightarrow \R^D$$ by 
\begin{equation}\label{scalingop}
\tau_q^{a}(x) =(q^ax_1, \dots, q^ax_{2n}, q^{2a}x_{D}),
\end{equation}
 we can write
 $$\left(L_{D,q^a}\right)_{q^{a-\tau}}  = \tau_q^{a}(E_q).$$
 
 By the equivalence between \eqref{MeasFSsim0} and \eqref{MeasFSsim1}, we have 
\begin{align*}
(& \# \left(\big\{(n,m) \in \mathbb{Z}^D \mathsf{x} \ \mathbb{Z}^D : \Vert n \Vert_{\alpha}, \Vert m \Vert_{\alpha} \!\leq Cq^a, \abs{\phi_\al(n, m) - q^a} \leq q^{a-\tau} \big\}\right)
\\
&\sim N_C \left(\big\{(u, v) \in \tau_q^\alpha(E_q) \ \mathsf{x} \ \tau_q^\alpha(E_q) :  \abs{ \phi_\al(u,v)  - q^a} \leq q^{a-\tau} \}, q^{a-\tau}\right).\\
\end{align*}

Scaling down, we have
\begin{align*}
& N_C \left(\big\{(u, v) \in \tau_q^\alpha(E_q) \ \mathsf{x} \ \tau_q^\alpha(E_q) :  \abs{ \phi_\al(u,v)  - q^a} \leq q^{a-\tau} \}, q^{a-\tau}\right) 
\\
\sim & N_R \left(\big\{(x,y) \in E_q \ \mathsf{x} \ E_q :  \abs{\phi_\al(x,y) - 1} \leq q^{-\tau} \}, q^{-\tau}, q^{-a-\tau}\right).
\\
\end{align*}

We finally conclude that
\begin{align}
& (q^{-D})^2N_R \left(\big\{(x,y) \in E_q \ \mathsf{x} \ E_q :  \abs{\phi_\al(x,y) - 1} \leq q^{-\tau} \}, q^{-\tau}, q^{-a-\tau}\right)
\label{MeasFSsim2}\\
&\sim \mu_{q,\tau} \ \mathsf{x} \ \mu_{q,\tau} \left(\big\{(x,y) \in E_q \ \mathsf{x} \ E_q : \abs{\phi_\al(x,y) - 1} \leq q^{-\tau} \big\}\right),
\label{MeasFSsim3}
\end{align}
where in the last line we have used that $\mu_{q,\tau} \times \mu_{q,\tau} (R_\tau\times R_\tau) = q^{-2D}$ since
$1=\mu_{q,\tau}\times \mu_{q,\tau} (E_q\times E_q) = \mu_{q,\tau} \times \mu_{q,\tau} (R_\tau\times R_\tau) \cdot q^{2D}$.
\end{proof}

 We have now arrived at the heart of the argument and reduced matters to establishing the following estimate.
\begin{lemma}[Measuring Lemma]\label{measuring lemma}
Let $r(\al)$ be as in \eqref{eq gam def}, and set 
\begin{equation}
    s:=D-r(\al)=\begin{cases}
        \frac{D+1}{2} & \text{ for } \alpha=2 \textrm{ and } 4.\\
        \frac{D+2}{2} & \text{ for } \alpha\geq 6.
    \end{cases}.
\end{equation}
Then, for 
$\tau \in (a, \frac{(D-1)a}{s-1}]$,
\begin{equation}\label{keyestimate}
\mu_{q,\tau}  \ \mathsf{x} \ \mu_{q,\tau} \left( \big\{ (x,y) \in E_q \ \mathsf{x} \ E_q : \abs{\phi_\al(x,y) - 1} \leq q^{-\tau} \big\} \right) \lesssim q^{-\tau}.
\end{equation}
\end{lemma}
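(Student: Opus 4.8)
The plan is to convert the measure estimate \eqref{keyestimate} into a bound for the generalized Radon transform of $\mu_{q,\tau}$, and then to split that bilinear expression, via the Fourier transform and the Cauchy--Schwarz inequality, into an energy integral (handled in Section~\ref{boundI}) and an averaging-operator term (handled in Section~\ref{sec boundII}).

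First I would majorize the indicator of the shell by a Schwartz bump of $\phi_\al$: fixing $\psi\ge 0$ Schwartz with $\psi\ge1$ on $[-1,1]$ and $\widehat\psi$ compactly supported,
\begin{equation*}
\mu_{q,\tau}\times\mu_{q,\tau}\bigl(\{|\phi_\al-1|\le q^{-\tau}\}\bigr)\ \le\ \iint\psi\bigl(q^{\tau}(\phi_\al(x,y)-1)\bigr)\,d\mu_{q,\tau}(x)\,d\mu_{q,\tau}(y)\ =\ \int \psi\bigl(q^{\tau}(t-1)\bigr)\,\langle R_t\mu_{q,\tau},\mu_{q,\tau}\rangle\,dt,
\end{equation*}
where $R_t$ is the generalized Radon transform averaging over the Heisenberg sphere $\{y:\phi_\al(x,y)=t\}$ against a smooth density. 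Since $\int\psi(q^{\tau}(t-1))\,dt\lesssim q^{-\tau}$, it is enough to prove the $\tau$-independent estimate $\langle R_t\mu_{q,\tau},\mu_{q,\tau}\rangle\lesssim 1$, uniformly for $t$ near $1$.

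To estimate $\langle R_t\mu_{q,\tau},\mu_{q,\tau}\rangle$, I would write $R_t$ through Fourier inversion in the radial variable, $R_t=\int e^{-2\pi i\lambda t}A_\lambda\,d\lambda$ with $A_\lambda$ the oscillatory integral operator of kernel $e^{2\pi i\lambda\phi_\al(x,y)}$, insert a Littlewood--Paley decomposition of $\mu_{q,\tau}$, and apply Cauchy--Schwarz. This dominates $\langle R_t\mu_{q,\tau},\mu_{q,\tau}\rangle$ by a product $I^{1/2}\cdot II^{1/2}$ in which $I$ is a negative-order energy integral of $\mu_{q,\tau}$ --- of the shape $\int|\widehat{\mu_{q,\tau}}(\xi)|^{2}(1+|\xi|)^{-2r(\al)}\,d\xi$, equivalently a Riesz-type energy $\iint|x-y|^{-(D-2r(\al))}\,d\mu_{q,\tau}\,d\mu_{q,\tau}$ --- and $II$ is the squared $L^2$-norm of $R_t$ applied to the scaled, truncated-lattice measure $\mu_{q,\tau}$. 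The factor $I$ is controlled in Section~\ref{boundI} by the energy estimates for measures on scaled truncated lattices from \cite{IT} (when $\al=4$) and from \cite{CT}, \cite{Campo} (when $\al\neq4$); it is precisely the balance between this bound and the decay furnished by $II$ that singles out the window $\tau\in(a,\tfrac{(D-1)a}{s-1}]$ and, for $\al=4$ with $n=1,2$, removes the logarithm present in the bound inherited from \cite{GNT}. The factor $II$ is controlled in Section~\ref{sec boundII} via the sharp $L^{2}\to L^{2}_{s}$ Sobolev mapping property of the averaging operator (with $s=D-r(\al)$ as in \eqref{eq s def}), which in turn rests on the rank of the Monge--Ampere matrix \eqref{eq MongeAmp} of $\phi_\al$ computed in Section~\ref{sec rank calc}: full rank $D+1$, hence $r(\al)=\tfrac{D-1}{2}$, for $\al\in\{2,4\}$, and rank $D$, hence $r(\al)=\tfrac{D-2}{2}$, for $\al\ge6$.

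The step I expect to be the main obstacle is the input to $II$: that incorporating the Heisenberg group law (the matrix $J$ in \eqref{phi}) keeps the Monge--Ampere determinant of $\phi_4$ nonvanishing everywhere away from the origin, in particular at the poles of the Kor\'anyi sphere, where all principal curvatures of the underlying Euclidean hypersurface vanish --- this is what gives $R_t$ its optimal smoothing order --- together with the finer statement that for $\al\ge6$ the rank of \eqref{eq MongeAmp} drops by exactly one and no more. It is the value of $r(\al)$ produced here that is large enough to be matched against the energy estimate of the previous step and to push $\tau$ all the way to $\tfrac{(D-1)a}{s-1}$. A subsidiary technical point is keeping track of the anisotropy of $E_q$ (sidelengths $q^{-\tau}$ in $2n$ coordinates and $q^{-a-\tau}$ in the last) against the parabolic homogeneity of $\phi_\al$, which is what confines the $\lambda$-integral above to $|\lambda|\lesssim q^{\tau}$ and lets the multi-parameter form of the energy estimate apply.
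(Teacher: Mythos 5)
Your overall route coincides with the paper's: majorize the shell by an averaging operator, pull out the factor $q^{-\tau}$, split the resulting pairing $\langle T_q\mu_{q,\tau},\mu_{q,\tau}\rangle$ by Cauchy--Schwarz on the Fourier side into an energy factor and an operator factor, control the energy by the estimates of \cite{IT}, \cite{CT}, \cite{Campo}, and control the operator factor through the $L^2$-Sobolev smoothing dictated by the rank of the Monge--Amp\`ere matrix, with a Littlewood--Paley decomposition to localize frequencies. (Whether one disintegrates into sharp transforms $R_t$ or keeps the thickened, $q^{\tau}$-normalized operator $T_q$, as the paper does, is immaterial.)

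There is, however, a genuine problem with your exponent bookkeeping, and it is not cosmetic. First, the smoothing order of the averaging operator is $r(\al)$, not $s=D-r(\al)$: the paper proves $\mathcal{T}_q:L^2\to L^2_{r(\al)}$ (gain $\frac{D-1}{2}$ for $\al\in\{2,4\}$ and $\frac{D-2}{2}$ for $\al\ge 6$), and a gain of $s=\frac{D+1}{2}$ derivatives for an average over a hypersurface is impossible; indeed you yourself identify $r(\al)$ with the smoothing order when you invoke the rank computation, so your write-up is internally inconsistent on the key quantitative point. Second, with your split --- weight $\langle\xi\rangle^{-2r(\al)}$ on the measure factor and hence $\langle\xi\rangle^{+2r(\al)}$ on the other --- the second factor is $\|R_t\mu_{q,\tau}\|_{H^{r(\al)}}$, and the correct mapping $L^2\to L^2_{r(\al)}$ only reduces it to $\|\mu_{q,\tau}\|_{L^2}$, which is \emph{not} $O(1)$ (it grows like a positive power of $q$ once $\tau>a$), so your term $II$ cannot be closed. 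The balanced choice is weight $\pm\frac{r(\al)}{2}$ on the two factors: then $I^2$ is the Riesz energy with exponent exactly $s=D-r(\al)$, which is both why the hypothesis $2\le t\le\frac{D+2}{2}$ of Proposition \ref{energy} is satisfied and where the window $\tau\in\big(a,\frac{(D-1)a}{s-1}\big]$ in the lemma comes from (your stated exponent $D-2r(\al)$ equals $1$ for $\al\in\{2,4\}$, which lies outside that proposition's range and would produce the wrong $\tau$-window); and $II$ is bounded by spending the full gain $r(\al)$ inside a Littlewood--Paley decomposition to get $\|\widehat{T_q\mu_{q,\tau}}\langle\cdot\rangle^{r(\al)/2}\|_2\lesssim\|\widehat{\mu_{q,\tau}}\langle\cdot\rangle^{-r(\al)/2}\|_2\lesssim 1$, i.e.\ both factors are ultimately controlled by the same energy bound --- this is exactly Proposition \ref{prop op mapping}. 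With these corrections your argument becomes the paper's proof.
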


To show \eqref{keyestimate} holds, we begin by re-writing the left hand side of the inequality as
$$\mu_{q,\tau} \times \mu_{q,\tau} (\{ (x,y): |\phi_\al(x,y)-1| \le q^{-\tau}\})
=\iint_{\{|\phi_\al(x,y)-1| \le q^{- \tau } \}}\psi(x,y) \mu_{q,\tau}(y)\mu_{q,\tau}(x)dydx,$$
where $\psi$ is an appropriate compactly supported smooth bump-function with support in a ball centered at origin. 
\vs 

Define
\begin{equation}\label{operator} T_q f(x)= q^{ \tau }\int_{\{|\phi_\al(x,y)-1| \le q^{- \tau } \}}f(y)\psi(x,y) dy.\end{equation}
It follows that the left-hand-side of $(\ref{keyestimate})$ can be written as $q^{- \tau } \left<T_q\mu_{q,\tau} ,\mu_{q,\tau} \right>$, and it remains to show that, 
$$ \left<T_q\mu_{q,\tau} ,\mu_{q,\tau} \right>\lesssim 1.$$ 
\vs

Recall $r(\alpha)$ from \eqref{eq gam def}. By  the Cauchy-Schwarz inequality:
\begin{equation} \label{heaven}  
\left<T_q\mu_{q,\tau} ,\mu_{q,\tau} \right> 
\le I \times II,
\end{equation}
where
\begin{equation}
    \label{def term 1}
    I:= \| \widehat{\mu_{q,\tau}}(\cdot) <\cdot>^{-\frac{r(\al) \,}{2}})\,  \|_2,
\end{equation}
and
\begin{equation}
\label{def term 2}
 II:= \| \widehat{T_q\mu_{q,\tau}}(\cdot)\times  <\cdot>^{\frac{r(\al)}{2}}) \|_2  
 \end{equation} 
with $<\cdot> = ( 1 + |\cdot|^2)^{1/2}$. 
\vs

To complete the proof of Lemma \ref{main lemma}, it remains to show that the terms $I$ and $II$ are bounded. 

\section{Bounding Term I}\label{boundI}
Here, we show that 
\begin{equation}\label{Bound1}
\int \big|\widehat{\mu_{q,\tau}}(\xi)\big|^2 
\abs{\xi}^{-r(\alpha)}
d\xi \lesssim 1,
\end{equation}
for $\tau$ in an appropriate range  to be determined momentarily.   
\vs

Using elementary properties of the Fourier transform, 
\begin{equation}\label{energy_2}
 \int_{|\xi|>1} \big|\widehat{\mu_{q,\tau}}(\xi)\big|^2 \abs{\xi}^{-r(\alpha)} d\xi 
\sim \! \iint \abs{x-y}^{r(\alpha)-D} d\mu_{q,\tau}(x)d\mu_{q,\tau}(y).
\end{equation}
See, for instance, \cite{Mattila15} for a discussion of energy-integrals and the Fourier transform. 

We bound the expression in \eqref{energy_2} using the following proposition, which appeared before as Proposition 2.5 in \cite{CT}.

\begin{proposition}[Energy integral bound]\label{energy}
Let $\mu_{q,\tau}$ as given in Definition~\ref{defMuQalpha} and recall $a= \frac{D}{D+1}$.
For $2\leq t\leq\frac{D+2}{2}$,
$$
\iint \abs{x-y}^{-t }d\mu_{q,\tau}(x)d\mu_{q,\tau}(y) \lesssim 1, 
$$
provided 
$
\tau \in \left(a, \frac{(D-1) a }{ t -1} \right].
$
\vs
\noindent
\end{proposition}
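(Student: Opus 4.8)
\smallskip
\noindent\textbf{Proof proposal.}
The plan is to exploit that $\mu_{q,\tau}$ is, up to a bounded multiplicative factor, the normalized Lebesgue measure on $E_q$, and that $E_q$ is a disjoint union of $\sim q^{D}$ axis-parallel boxes of dimensions $q^{-\tau}\times\cdots\times q^{-\tau}\times q^{-a-\tau}$ centered at the rescaled lattice points $(b_1q^{-a},\dots,b_{2n}q^{-a},b_Dq^{-2a})$, i.e.\ with spacing $q^{-a}$ in the first $D-1$ coordinates and $q^{-2a}$ in the last. Since $|E_q|\sim q^{D}V_R$ with $V_R=q^{-D\tau-a}$, the density of $\mu_{q,\tau}$ on $E_q$ is $\sim q^{-D}/V_R$ and each box carries $\mu_{q,\tau}$-mass $\sim q^{-D}$. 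Because $\mu_{q,\tau}$ is a probability measure, $\iint|x-y|^{-t}\,d\mu_{q,\tau}(x)\,d\mu_{q,\tau}(y)\le\sup_{x\in E_q}\int|x-y|^{-t}\,d\mu_{q,\tau}(y)$, and, decomposing dyadically, $\int|x-y|^{-t}\,d\mu_{q,\tau}(y)\lesssim\sum_{k\ge 0}2^{kt}\,\mu_{q,\tau}\!\left(B(x,2^{-k})\right)$; so everything reduces to good upper bounds for $\mu_{q,\tau}(B(x,\rho))$ at each dyadic scale $\rho$.

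First I would establish these ``ball growth'' estimates. For $x\in E_q$ lying in a box $B_x$ one has $\mu_{q,\tau}(B(x,\rho))\sim (q^{-D}/V_R)\,|E_q\cap B(x,\rho)|$, and $|E_q\cap B(x,\rho)|$ is obtained by a direct count, distinguishing the natural scales $q^{-a-\tau}\le q^{-2a}\le q^{-\tau}\le q^{-a}\le 1$ (this order holds when $a<\tau\le 2a$, the range relevant to the applications, where $t=s\ge\tfrac{D+1}{2}$; for $\tau>2a$ the middle two scales swap and the argument is the same). Typical cases: for $\rho\le q^{-a-\tau}$ the ball lies inside $B_x$ and $|E_q\cap B(x,\rho)|\sim\rho^{D}$; for $q^{-2a}\le\rho\le q^{-\tau}$ the ball is still inside $B_x$ in the first $D-1$ coordinates but meets $\sim\rho q^{2a}$ of the boxes in the last coordinate, so $|E_q\cap B(x,\rho)|\sim\rho^{D}q^{a-\tau}$; and for $\rho\ge q^{-a}$ one simply has $\mu_{q,\tau}(B(x,\rho))\sim\rho^{D}$, since at those scales $\mu_{q,\tau}$ resembles Lebesgue measure on the unit cube.

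Next I would feed these into the dyadic sum, so that each scale range contributes a geometric series in $\rho$: using $t\ge 2$ (hence $t>1$) the intermediate-scale series are dominated by their \emph{lower} endpoints, while using $t<D$ — which holds since $t\le\tfrac{D+2}{2}<D$ for $D=2n+1\ge 3$ — the fine-scale series converge and are dominated by their \emph{upper} endpoints. One then checks that the decisive contribution comes from the box scale $\rho\sim q^{-\tau}$ (with the short-direction neighbouring boxes of $B_x$ being the main players) and equals
$$
q^{(t-1)\tau+2a-D}=q^{(t-1)\tau-(D-1)a},
$$
where the last equality uses the identity $D-2a=(D-1)a$, equivalently $a(D+1)=D$; every other scale contributes $\lesssim 1$ or is dominated by this term in the range considered. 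The displayed quantity is $\lesssim 1$ exactly when $(t-1)\tau\le(D-1)a$, i.e.\ $\tau\in\big(a,\tfrac{(D-1)a}{t-1}\big]$, which is the hypothesis; hence $\int|x-y|^{-t}\,d\mu_{q,\tau}(y)\lesssim 1$ uniformly in $q$ and the proposition follows.

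The hard part is purely the bookkeeping in the previous two steps: tracking the anisotropy of the boxes (two distinct side-lengths $q^{-\tau}$ and $q^{-a-\tau}$) and of the two lattice spacings $q^{-a},q^{-2a}$, correctly deciding in which of the (up to five) scale regimes a given $\rho$ lies, and checking that the bound is governed by the box-scale term $\rho\sim q^{-\tau}$ — this is where the restrictions $t\ge 2$ and $t\le\tfrac{D+2}{2}$ enter. An essentially equivalent alternative is to argue on the Fourier side, using $\iint|x-y|^{-t}\,d\mu_{q,\tau}\,d\mu_{q,\tau}\sim\int|\widehat{\mu_{q,\tau}}(\xi)|^{2}|\xi|^{t-D}\,d\xi$ (cf.\ \cite{Mattila15} and \eqref{energy_2}), writing $\widehat{\mu_{q,\tau}}$ as $q^{-D}$ times a smoothed dual-lattice sum times the rapidly decaying Fourier transform of a single box, and estimating the resulting integral; the same scale analysis then reappears in frequency space.
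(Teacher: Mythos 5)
The paper itself does not prove Proposition \ref{energy}: it quotes it from \cite{CT} and \cite{Campo} and only checks that the hypotheses there are met. Your dyadic ball-counting argument is therefore a genuinely different, self-contained route, and in the regime the paper actually uses it is correct. Indeed, for $t\ge\tfrac{D+1}{2}$ (which forces $\tau\le\tfrac{(D-1)a}{t-1}\le 2a$, the scale ordering you work with) your reduction $\iint|x-y|^{-t}d\mu\,d\mu\le\sup_x\sum_k 2^{kt}\mu_{q,\tau}(B(x,2^{-k}))$, your ball-growth estimates in the five regimes, and your identification of the critical term $q^{(t-1)\tau+2a-D}=q^{(t-1)\tau-(D-1)a}$ at $\rho\sim q^{-\tau}$ all check out: the fine-scale endpoint $\rho\sim q^{-a-\tau}$ contributes exponent $t(a+\tau)-(D-1)a-D$, which at $\tau=\tfrac{(D-1)a}{t-1}$ is $\le 0$ precisely because $(t-2)(t-D)\le 0$; the endpoint $\rho\sim q^{-2a}$ contributes exponent $2at-2a(D-1)+(D-1)\tau-D$, which is $\le 0$ using $\tau\le 2a$ when $t\le\tfrac{D+1}{2}$ and $\tau\le\tfrac{(D-1)a}{t-1}$ when $t\ge\tfrac{D+1}{2}$; and the scales $\rho\ge q^{-a}$ give $\lesssim 1$ since $t<D$. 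So your sketch gives a short, transparent proof in that range, in place of the considerably longer computations of \cite{Campo} that the paper outsources to; note, though, that the hypotheses enter the dominance check as $2\le t\le D$ and $t\ge\tfrac{D+1}{2}$ (or $\tau\le 2a$), not really through $t\le\tfrac{D+2}{2}$ as you suggest.

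The genuine gap is your parenthetical claim that for $\tau>2a$ ``the middle two scales swap and the argument is the same.'' It is not. When $\tau>2a$ (possible under the stated hypotheses only when $t<\tfrac{D+1}{2}$, hence $D\ge 5$) there is an extra regime $q^{-\tau}\le\rho\le q^{-2a}$ in which $B(x,\rho)$ contains a single whole box, so $\mu_{q,\tau}(B(x,\rho))\sim q^{-D}$ and that range contributes $\sim q^{t\tau-D}$, which is not $\lesssim 1$ once $\tau>D/t$; the hypothesis allows $\tau$ up to $\tfrac{(D-1)a}{t-1}>D/t$ exactly when $t<\tfrac{D+1}{2}$. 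Moreover this is not a repairable bookkeeping issue: pairing each point of a box with its own box gives the lower bound $\iint|x-y|^{-t}d\mu_{q,\tau}\,d\mu_{q,\tau}\gtrsim q^{D}\cdot q^{-2D}\cdot(\operatorname{diam}R_\tau)^{-t}\sim q^{t\tau-D}$, so for instance $D=5$, $t=2$, $\tau=4a=\tfrac{10}{3}$ gives energy $\gtrsim q^{5/3}$, and the proposition \emph{as stated here} fails in that corner of its hypotheses. (In \cite{CT} the relevant case is $D=3$, where $2=\tfrac{D+1}{2}$ and no such corner exists; and the paper only ever applies the proposition with $t=D-r(\alpha)\in\{\tfrac{D+1}{2},\tfrac{D+2}{2}\}$, so its main results are unaffected.) In your write-up you should therefore either restrict to $t\ge\tfrac{D+1}{2}$, equivalently $\tau\le 2a$, or flag explicitly that the quoted range $2\le t\le\tfrac{D+2}{2}$ is too generous for $D\ge 5$; as written, the sentence dismissing the $\tau>2a$ case asserts something false.
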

We note that the upper bound on $\tau$  will translate to a lower bound on the range of $\delta$ in the main theorem.
Applying Proposition \ref{energy} with $t=D-r(\al)$, with $r(\al)$ as in\eqref{eq gam def}, will complete the estimate and demonstrates that \eqref{Bound1} holds when $\tau \in \left(a, \frac{(D-1) a }{ t -1} \right]$. 

We refer to \cite[Proposition 2.5]{CT}, or \cite[\S 5.2]{Campo} for the proof of Proposition \ref{energy} in the case when $n=1$, and to \cite[\S 5.3]{Campo} in higher dimensions (with the disclaimer that they use the variables $n$ and $d$, to refer to our $D$ and $n$ respectively). The cited estimates as phrased in \cite{CT, Campo} appear to depend on a parameter $\gamma(\alpha)$, analogous to our parameter $r(\alpha)$, with $t=D-\gamma(\alpha)$. However, a quick glance at the proof shows that $\gamma(\alpha)$ play no role in the argument. Indeed, with $a=\frac{D}{D+1}$, one only needs $t$ and $\tau$ to satisfy the conditions
$$2\leq t\leq 3, \qquad \tau\leq \frac{2a}{t-1}$$
when $n=1$ (see condition (5.6) and the discussion right above in \cite{Campo}) for $n=1$;
and analogously, the conditions
$$1<t\leq t_{\alpha}, \qquad \tau\leq \frac{(D-1)a}{t-1}$$
for $n\geq 2$, where
\begin{equation}
    \label{eq camp tal def}
    t_{\alpha}:=\begin{cases}
        \frac{D+2}{2}, & \alpha=2.\\
        D-\min\left\{\frac{D-1}{\alpha}, \frac{1}{2}\right\}, &\alpha\geq 4,
    \end{cases}
\end{equation}
(see condition (5.22) and the discussion preceding it in \cite{Campo}).
Since $t_{\alpha}\geq \frac{D+2}{2}$ for all values of $\alpha$ under consideration, the energy estimates from \cite{Campo} can be applied to our setting.

These energy estimates in \cite{CT, Campo} are inspired by the proof of a similar lemma in \cite{IT}. However, while in \cite{IT}, it is required that the surfaces under consideration satisfy a non-vanishing curvature condition, the methods in \cite{CT, Campo} allow for vanishing curvatures. 
As compared to the proof in \cite{IT}, the argument in \cite{Campo} is much longer and more complex. 
A crude estimate is utilized in \cite{IT}, in which the rectangles - analogous to those introduced above in \eqref{EqDef} - are replaced by larger super-set cubes.   This significantly simplifies bounding the arising energy integral.
\\

\section{Bounding Term II}
\label{sec boundII}
It remains to show that 
\begin{equation}\label{Tbound}  \| \widehat{T_q\mu_{q,\tau}}(\cdot)\times  <\cdot>^{\frac{r(\al)}{2}}) \|_2 \lesssim 1,\end{equation}
where $T_q$, $\mu_{q,\tau}$ and $r(\alpha)$ are as defined in \eqref{operator}, \eqref{def measure} and \eqref{eq gam def} respectively. 

In this section, we shall establish the following mapping estimate for the operator $T_q$.
\begin{proposition}[Operator $L^2$ estimate]
\label{prop op mapping}
Let $f$ be a non-negative Schwartz function satisfying
\begin{equation}
    \label{eq fin energy assump}
    \| \widehat{f}(\cdot)\times  <\cdot>^{-\frac{r(\al)}{2}}) \|_2\lesssim 1.
\end{equation} 
Then 
\begin{equation*}
    \|(\widehat{T_qf}(\cdot)\times  <\cdot>^{\frac{r(\al)}{2}})\|_2\lesssim 1.
\end{equation*}
\end{proposition}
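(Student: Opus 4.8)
The plan is to deduce Proposition~\ref{prop op mapping} from two ingredients: the $L^2 \to L^2_{r(\alpha)/2}$ Sobolev mapping property of a suitably normalized averaging operator over the Kor\'anyi (resp. Heisenberg) sphere $\{\phi_\alpha(x,y) = 1\}$, and the finite-energy hypothesis \eqref{eq fin energy assump} on $f$. First I would observe that $T_q f(x) = q^\tau \int_{\{|\phi_\alpha(x,y)-1|\le q^{-\tau}\}} f(y)\psi(x,y)\,dy$ is, up to the cutoff $\psi$, a mollification at scale $q^{-\tau}$ of the generalized Radon transform $\mathcal{R}f(x) = \int_{\{\phi_\alpha(x,y)=1\}} f(y)\psi(x,y)\,d\sigma_x(y)$, where $d\sigma_x$ is the induced surface measure; concretely, $T_q f = \mathcal{R}f * \rho_{q^{-\tau}}$ in the appropriate coarea sense, so $\widehat{T_q f}(\xi) = \widehat{\mathcal{R}f}(\xi)\cdot m_q(\xi)$ with $|m_q(\xi)| \lesssim 1$ uniformly in $q$ (the mollifier symbol is bounded). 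Hence it suffices to show $\|\langle\cdot\rangle^{r(\alpha)/2}\,\widehat{\mathcal{R}f}\|_2 \lesssim 1$, i.e. $\|\mathcal{R}f\|_{L^2_{r(\alpha)/2}} \lesssim 1$, uniformly.

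The second step is to invoke the $L^2$-Sobolev regularity of $\mathcal{R}$. By the standard theory of Fourier integral operators associated to a canonical relation that is (locally) a canonical graph — which holds here because, by Proposition~\ref{prop rank} (Section~\ref{sec rank calc}), the Monge--Amp\`ere matrix \eqref{eq MongeAmp} of $\phi_\alpha$ has full rank $D$ when $\alpha \in \{2,4\}$ and rank exactly $D-1$ when $\alpha \ge 6$ — the operator $\mathcal{R}$ maps $L^2_s \to L^2_{s + \kappa}$ for all $s$, where the gain $\kappa$ equals $\tfrac{D-1}{2}$ in the nondegenerate case and $\tfrac{D-2}{2}$ in the corank-one case (the loss of one in the rank of the Hessian costs a half-derivative in the fixed-density estimate for the underlying oscillatory integral). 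Noting $r(\alpha) = \kappa$ in either case (compare \eqref{eq gam def}), we get $\mathcal{R}: L^2_{-r(\alpha)/2} \to L^2_{r(\alpha)/2}$. Combined with \eqref{eq fin energy assump}, which says precisely $f \in L^2_{-r(\alpha)/2}$ with norm $\lesssim 1$, this yields $\|\mathcal{R}f\|_{L^2_{r(\alpha)/2}} \lesssim 1$, and then the mollifier bound from Step~1 finishes the argument.

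I expect the main obstacle to be making the first step genuinely uniform in $q$, i.e. controlling the mollification symbol $m_q(\xi)$ and the coarea manipulation without losing a power of $q$ or $\tau$: one must check that replacing the singular measure $d\sigma_x$ by its $q^{-\tau}$-thickening $q^\tau \mathbbm{1}_{\{|\phi_\alpha-1|\le q^{-\tau}\}}\,dy$ does not degrade the Sobolev estimate, which requires the mollified symbol to be bounded by $1$ on \emph{all} frequency scales, including $|\xi| \gtrsim q^\tau$ where the mollification genuinely cuts off — there the estimate survives because $\widehat{\mathcal{R}f}$ itself decays at the rate dictated by the Sobolev bound, so the product remains in $L^2$ after multiplication by $\langle\xi\rangle^{r(\alpha)/2}$. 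A secondary technical point is that the canonical-graph / corank-one regularity statements are local in $(x,y)$, so one should use the cutoff $\psi$ (supported near the relevant piece of the incidence set, away from the origin) together with a compactness/partition-of-unity argument to patch the local estimates into the global bound \eqref{Tbound}; the nonvanishing of $\nabla_x\phi_\alpha$ and $\nabla_y\phi_\alpha$ away from the origin (immediate from \eqref{phi}) guarantees the conormal hypotheses needed for the FIO framework on the support of $\psi$.
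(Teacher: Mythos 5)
Your overall skeleton (reduce to an $L^2$-Sobolev smoothing estimate for the averaging operator, with the gain dictated by the rank of the Monge--Amp\`ere matrix, then feed in \eqref{eq fin energy assump}) is the same as the paper's, but two steps in your execution do not hold as stated. First, the factorization $\widehat{T_qf}(\xi)=\widehat{\mathcal{R}f}(\xi)\,m_q(\xi)$ is false: $T_q$ is a variable-coefficient operator, and the set $\{|\phi_\alpha(x,y)-1|\le q^{-\tau}\}$ is a thickening in the \emph{level parameter}, not a Euclidean convolution of $\mathcal{R}f$ with an approximate identity, so there is no bounded Fourier multiplier relating $T_qf$ to $\mathcal{R}f$. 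The correct version is via the coarea formula, $T_qf(x)=q^{\tau}\int_{|t-1|\le q^{-\tau}}\mathcal{R}_tf(x)\,dt$ (the Jacobian factor is harmless since $\nabla_y\phi_\alpha\neq 0$ on the support of $\psi$), so what one needs is the Sobolev bound for $\mathcal{R}_t$ \emph{uniformly in $t$ near $1$}, followed by Minkowski's inequality; this is how the paper phrases matters, working directly with the thickened operator $\mathcal{T}_q$ built from $\Phi=\phi_\alpha^{\alpha}$ (which also avoids smoothness issues of $\phi_\alpha$ and permits the pointwise domination $T_qf\le\mathcal{T}_qf$ for $f\ge 0$) and asserting \eqref{mapping comb} with constants uniform in $t$ and $q$. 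Your closing justification (``the estimate survives because $\widehat{\mathcal{R}f}$ itself decays'') does not address the actual issue, which is uniformity of the operator bound in the level parameter, not decay of $\widehat{\mathcal{R}f}$.

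The more serious gap is the assertion that the operator maps $L^2_s\to L^2_{s+\kappa}$ for \emph{all} $s$, which you invoke at $s=-r(\alpha)/2$. For $\alpha\in\{2,4\}$ this is legitimate (canonical graph, compose with $\langle D\rangle^{s}$ in the FIO calculus). But for $\alpha\ge 6$ the $(D-2)/2$ gain is not obtained from a clean corank-one FIO theorem: in the paper it comes from part (iv) of Proposition \ref{prop rank} via a partition of unity and a slicing argument near $\oy=0$ (freezing the last coordinates and using nondegeneracy in $(\ux,\uy)$ only), which directly yields only the $s=0$ estimate $\mathcal{T}_q:L^2\to L^2_{(D-2)/2}$; your claim that the rank drop ``costs a half-derivative \ldots for all $s$'' is exactly the statement that needs proof and is not supplied. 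The paper bridges precisely this gap with a Littlewood--Paley almost-orthogonality argument: decompose $\mathcal{T}_qf=\sum_k\bigl(\tilde L_k\mathcal{T}_qL_kf+E_kf\bigr)$ as in \eqref{eq LP dec}, dispose of the off-diagonal pieces by rapid decay (integration by parts, as in \cite[Lemma 2.1]{seeger93}), and on the diagonal use only the $L^2\to L^2_{r(\alpha)}$ bound, splitting the gain $2^{-kr(\alpha)}$ evenly between the output weight $\langle\xi\rangle^{r(\alpha)/2}$ and the hypothesis \eqref{eq fin energy assump}. To make your proposal complete you must either add such a frequency-localization argument or actually prove the full Sobolev scale in the degenerate case; as written, the key inequality is asserted rather than proved.
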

It follows from Proposition \ref{energy} 
that $\mu_{q, \tau}$ satisfies the estimate \eqref{eq fin energy assump}. We can thus apply Proposition \ref{prop op mapping}, with $f=\mu_{q, \tau}$, to obtain the desired bound \eqref{Tbound}. We now focus on proving Proposition \ref{prop op mapping}.

Recall the Monge-Ampere determinant of a function $\phi: \mathbb{R}^D\to \mathbb{R}^D\to \mathbb{R}$ from \eqref{eq MongeAmp}, which is the determinant of the matrix
\begin{equation}
    \label{def mat MongeAmp}
    M (\phi)(x, y):=\begin{pmatrix}
         0& \left(\nabla_y\phi\right)^{\intercal} (x, y)\\
        \nabla_x\phi (x, y)& \frac{\partial^2\phi}{\partial_x\partial_y} (x, y)
    \end{pmatrix}.
\end{equation}
The $L^2$ Sobolev mapping property of the operator $T_q$  is determined by the rank of the matrix $M(\phi_{\alpha})$. Let $B$ denote the unit ball in $\mathbb{R}^D$ with respect to the Euclidean norm. For the calculations to follow, it will be easier to work with the function $\Phi: \mathbb{R}^D\times \mathbb{R}^D$ given by
$$\Phi_{\alpha}=\Phi:=\phi^{\alpha}_{\alpha}.$$
Observe that there exist positive constants $c_{\alpha}', c_{\alpha}''$ such that for $(x, y)\in B\times B$ we have
$$c_{\alpha}'|\Phi(x, y)-1|\leq |\phi_{\alpha}(x, y)-1|\leq c_{\alpha}''|\Phi(x, y)-1|.$$
Accordingly, for a smooth compactly supported bump-function $\psi$ supported in a ball centered at the origin in $\mathbb{R}^{D}\times \mathbb{R}^{D}$, we define
\begin{equation}\label{def big op} \mathcal{T}_q f(x)= q^{ \tau }\int_{\{c_{\alpha}'|\Phi_\al(x,y)-1| \le\, q^{- \tau } \}}f(y)\psi(x,y) dy.\end{equation}
Then for any non-negative function $f$,
$$T_q f(x)\leq \mathcal{T}_q f (x).$$

Therefore, to establish Proposition \ref{prop op mapping}, it suffices to show that
\begin{equation}
    \label{eq L2 est}
    \|(\widehat{\mathcal{T}_qf}(\cdot)\times  <\cdot>^{\frac{r(\al)}{2}})\|_2\lesssim 1
\end{equation}
for any non-negative Schwartz function $f$ satisfying \eqref{eq fin energy assump}.

The $L^2$ Sobolev mapping properties of the operator $\mathcal{T}_q$ are determined by the rank of the Monge-Ampere matrix $M(\Phi)$.
In Section \ref{sec rank calc}, we shall establish the following.
\begin{proposition}[Rank Proposition]
\label{prop rank}
Let $\alpha$ be an even positive integer, and let $B$ denote the unit ball in $\mathbb{R}^D$ with respect to the Euclidean norm. For $t>0$, let $(x, y)\in B\times B$ with $\phi_{\alpha}(x,y)=t.$ Finally, set $\Phi=\phi_{\alpha}^{\alpha}$, and let $M(\Phi)(x,y)$ be the matrix in \eqref{def mat MongeAmp}.

(i) We have 
\begin{equation*}
    \label{eq grad est}
    \nabla_{x} \Phi(x, y)\neq 0 \qquad \textrm{ and } \qquad \nabla_{y} \Phi(x, y)\neq 0.
\end{equation*}

(ii) For $\alpha=2$, we have 
\begin{equation*}
    \label{eq MA 4}
    \det M (\Phi)(x, y)=\det \begin{pmatrix}
         0& \left(\nabla_y\Phi\right)^{\intercal} (x, y)\\
        \nabla_x\Phi (x, y)& \frac{\partial^2\Phi}{\partial_x\partial_y}(x, y) 
    \end{pmatrix}
\neq 0.
\end{equation*}

(iii) For $\alpha=4$, we also have 
\begin{equation*}
    \det M (\Phi)(x, y)
\neq 0.
\end{equation*}

(iv) For $\alpha\geq 6$ and $\oy\neq 0$, we have
\begin{equation*}
     \det M (\Phi)(x, y)\neq 0.
\end{equation*}
For $\oy=0$, we have
\begin{equation*}
    \label{eq MA not4}
    \mathrm{rank }\, M(\Phi)(x, y) = D=2n+1\,.
\end{equation*}
Indeed, when $\oy=0$, 
\begin{equation*}
    \label{eq sub MA not4}
    \det \Tilde{M}(\Phi)(x, y) \neq 0,
\end{equation*}
where $\Tilde{M} (\Phi)$ is a minor of $M(\Phi)$ of order $2n+1$ given by
\begin{equation*}
    \label{def sub MA}
    \widetilde{M} (\Phi)(x, y)=\begin{pmatrix}
         0& \left(\nabla_{\uy}\Phi\right)^{\intercal} (x, y)\\
        \nabla_{\ux}\Phi (x, y)& \frac{\partial^2\Phi}{\partial_\ux\partial_\uy}(x, y) 
    \end{pmatrix}.
\end{equation*}
\end{proposition}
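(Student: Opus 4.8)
The plan is to reduce everything to explicit computations with the polynomial $\Phi = \phi_\alpha^\alpha = |\ux-\uy|^\alpha + C_\alpha |\ox - \oy + \tfrac12 \uy^\intercal J \ux|^{\alpha/2}$, exploiting the quasi-homogeneity to fix $t=1$ and using the symmetries of the problem to normalize the point $(x,y)$. First I would compute the gradients: writing $w := \ux - \uy$ and $\sigma := \ox - \oy + \tfrac12 \uy^\intercal J\ux$, we have $\nabla_{\ux}\Phi$, $\nabla_{\uy}\Phi$ expressed in terms of $w$, $\sigma$, and the vectors $J^\intercal \uy$, $J\ux$, while $\partial_{\ox}\Phi = \partial_{\oy}\Phi \cdot(-1)$ up to the sign coming from $\sigma$. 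For part (i), since $\Phi(x,y)=1$ forces $(w,\sigma)\neq(0,0)$, at least one of $|w|^{\alpha}$ or $|\sigma|^{\alpha/2}$ is positive; inspecting the $\ox$- and $\oy$-derivatives (which are nonzero precisely when $\sigma\neq0$) and the $\ux$-, $\uy$-derivatives (nonzero when $w\neq 0$, after accounting for the lower-order $J$-terms) shows neither full gradient can vanish — this is the easy part and mirrors \eqref{eq grad nzero}.

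The core of the argument is the Monge–Ampère determinant. I would set up the matrix $M(\Phi)$ in block form, separating the $2n$ ``horizontal'' variables $\ux,\uy$ from the single ``vertical'' variable $\ox,\oy$, so that $\tfrac{\partial^2\Phi}{\partial x\partial y}$ decomposes into a $2n\times 2n$ block, two $2n\times 1$ blocks, and a scalar. The key structural observation — already used in \cite{sri24} — is that $\partial^2 \Phi/\partial\ox\partial\oy$ and the mixed second derivatives involving one vertical variable are comparatively simple, so one can perform row/column operations (Schur complement with respect to the vertical block when $\partial_{\ox}\partial_{\oy}\Phi\neq 0$, i.e. when $\alpha/2\geq 2$ and $\sigma\neq 0$) to collapse the determinant to something tractable. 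For $\alpha=2$ the function is a genuine quadratic polynomial and $M(\Phi)$ has constant entries up to the linear $J$-terms; the determinant can be computed in closed form and seen to be a nonzero constant multiple of a power of $\det J = 1$, giving (ii). For $\alpha=4$, $\Phi$ is quartic and $\sigma^2$ appears; here I expect the computation in \cite{sri24} to carry over: after the reduction one is left with a determinant that factors, with each factor a nonvanishing expression on $\{\Phi=1\}$ — crucially, the term $\tfrac12\uy^\intercal J\ux$ inside $\sigma$ contributes the ``rotational curvature'' that rescues the pole $w=0$, where the Euclidean Gaussian curvature degenerates. This gives (iii).

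For part (iv), $\alpha\geq 6$, the plan is to show the determinant is a product of a ``good'' factor and the single ``bad'' factor $\oy$ (or more precisely a quantity vanishing exactly when $\oy=0$); away from $\oy=0$ this yields nonvanishing, and when $\oy=0$ one must exhibit a nonzero $(2n+1)\times(2n+1)$ minor. The natural candidate is $\widetilde M(\Phi)$, obtained by deleting the vertical variables entirely — i.e. the Monge–Ampère matrix of $\Phi$ restricted to the horizontal directions. When $\oy=0$ we have $\sigma = \ox + \tfrac12\uy^\intercal J\ux$; I would compute $\nabla_{\ux}\Phi$, $\nabla_{\uy}\Phi$ and the horizontal Hessian block, then use the specific algebraic form (a power of $|w|$ times something, plus a power of $|\sigma|$ times a rank-one-ish $J$-contribution) to evaluate $\det\widetilde M$ and check it is nonzero on $\{\Phi=1,\ \oy=0\}$ — again the $J$-bilinear term is what makes this work at $w=0$. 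The main obstacle I anticipate is organizing this last determinant computation cleanly: the horizontal Hessian is a sum of a multiple of the identity (from differentiating $|w|^\alpha$), a rank-one term $w w^\intercal$, a multiple of $J$, and rank-one terms built from $J\ux$ and $J^\intercal\uy$, so one needs the matrix-determinant lemma applied iteratively (or a well-chosen change of basis adapted to the symplectic form) to keep the bookkeeping finite; I would lean on the normalization afforded by the orthogonal-symplectic symmetry group of $\phi_\alpha$ and the dilation to reduce to a one- or two-parameter family of points before grinding the determinant.
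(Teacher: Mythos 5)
Your overall toolkit (Schur complements, the algebra of matrices of the form $\sigma I_{2n}+\lambda ww^{\intercal}+\kappa J$ from \cite{sri24}) points in the right direction, but the proposal has a genuine gap at the two places where the work actually lies. First, the paper's proof does \emph{not} grind the determinant of $M(\Phi)$ directly in the original variables: its key step is Lemma \ref{lem trans invar}, which uses the chain rule for $\Phi=\Psi\circ\Theta$ (with $\Theta(x,y)=x*y^{-1}$ and $\Psi(z)=|\uz|^{\alpha}+A|\oz|^{\alpha/2}$) to write $M(\Phi)(x,y)$ as a product of two explicitly invertible matrices and the matrix $N(\Psi)(x*y^{-1})$ of \eqref{eq def N psi}, built only from $\nabla\Psi$, the Hessian of $\Psi$, and the rotational term $\tfrac12\Psi_D'J$. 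Because $\Psi$ splits as a function of $\uz$ plus a function of $\oz$, the mixed blocks of $N(\Psi)$ vanish, and one single Schur complement together with Corollaries \ref{cor matrix inv est} and \ref{cor inn prod1} reduces everything to the positivity of a scalar $\mathfrak{X}$. Your plan keeps all the rank-one corrections built from $J\ux$ and $J\uy$ inside the Hessian and never exhibits how the resulting determinant ``factors''; for $\alpha=4$ you simply assert that ``the computation in \cite{sri24} carries over'', which is precisely the step that must be proved. (A minor point: for $\alpha=2$ the determinant is, up to sign, $A^{2}\det\bigl(2I_{2n}+\tfrac{A}{2}J\bigr)$, a nonzero constant, but not ``a power of $\det J$''.) Without the conjugation lemma, or some equivalent device, the direct bookkeeping you sketch is exactly the obstacle you acknowledge, and the proposal does not overcome it.

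Second, and more concretely, your case analysis in part (iv) is organized around the wrong set. The quantity that governs the rank drop for $\alpha\geq 6$ is your $\sigma=\ox-\oy+\tfrac12\ux^{\intercal}J\uy$, i.e.\ the vertical coordinate of $x*y^{-1}$ (in the paper's proof the letter $\oy$ is recycled to denote this coordinate of the translated point, which is what the statement's ``$\oy=0$'' means). On $\{\oy=0,\ \sigma\neq 0\}$, read literally as you do, the full Monge--Amp\`ere determinant is already nonzero and there is nothing to prove, while on $\{\sigma=0,\ \oy\neq 0\}$ the determinant does vanish, so the dichotomy ``good factor times the bad factor $\oy$'' and the verification of $\det\widetilde M\neq 0$ on $\{\Phi=1,\ \oy=0\}$ address the wrong locus. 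With $\sigma$ in place of $\oy$ your outline would align with the paper: when $\sigma=0$ and $\alpha\geq 6$ the entire last row and column of the reduced matrix vanish (since $\Psi_D'$ and $\Psi_{DD}''$ are proportional to positive powers of $|\oz|$), and the nonvanishing of the horizontal $(2n+1)\times(2n+1)$ minor follows from the same Schur-complement computation with $\kappa=0$. As written, however, the proposal both misidentifies the degenerate set and leaves the central determinant computations unexecuted.
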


By parts (ii) and (iii) of Proposition \ref{prop rank}, we conclude that for $\alpha=2$ and $\alpha=4$, the Monge-Ampere determinant of of $\Phi=\phi_{\alpha}^{\alpha}$ does not vanish on the set $\{(x, y)\in \mathbb{R}^{D}\times \mathbb{R}^{D}: \phi_{\alpha}(x,y)=t\}$. In other words, the function $\Phi$ satisfies the Phong-Stein rotational curvature condition on this set, and thus
\begin{equation}\label{mapping 4} \mathcal{T}_q: L^2(\mathbb{R}^D) \rightarrow L^2_{\frac{D-1}{2}}(\mathbb{R}^D)\end{equation}
with constants uniform in $t$ and $q$. We refer the reader to \cite{PS86, PhongStein} for the details (see also \cite{Stein}[Chapter XI, \S 3.1] for a background and a thorough description of rotation curvature).
\smallskip

On the other hand, when $\alpha\geq 6$, we conclude from part (iv) of the proposition that either the Monge-Ampere determinant does not vanish (away from the equator when $|\ox-\oy|>0$), or the determinant with respect to the variables $\ux$ and $\uy$ is non-zero (for a fixed choice of the variables $\ox, \oy\in B$). Therefore after a partition of unity and a slicing argument near the equator (when $\ox-\oy=0$), using the Phong-Stein rotational curvature condition when $|\ox-\oy|>0$ and the curvature condition in the restricted choice of variables near the equator, we can conclude that
\begin{equation}\label{mapping not4} \mathcal{T}_q: L^2(\mathbb{R}^D) \rightarrow L^2_{\frac{D-2}{2}}(\mathbb{R}^D).\end{equation}
Recalling the definition of $r(\alpha)$ from \eqref{eq gam def}, we can combine \eqref{mapping 4} and \eqref{mapping not4} to write
\begin{equation}\label{mapping comb} \mathcal{T}_q: L^2(\mathbb{R}^D) \rightarrow L^2_{r(\alpha)}(\mathbb{R}^D).\end{equation}
The Sobolev mapping property \eqref{mapping comb} can be leveraged to establish the estimate \eqref{eq L2 est} for $\mathcal{T}_q$, and consequently Proposition \ref{prop op mapping}, using an argument similar to that used in Section 3 of \cite{IT} (to prove their Proposition 2.2). We briefly sketch the details here.

\begin{proof}[Proof of Proposition \ref{prop op mapping}, given Proposition \ref{prop rank}]
As explained above, it suffices to establish \eqref{eq L2 est} for $\mathcal{T}_q$ satisfying the mapping property \eqref{mapping comb}.

Let $\eta_0(\xi)$ and $\eta(\xi)$ be non-negative Schwartz class functions supported in the ball $\{ |\xi| \leq 4 \}$ and the spherical shell $\{ 1 < |\xi| < 4\}$ respectively, and let $\tilde{\eta}_0(\xi)$ and $\tilde{\eta}(\xi)$ be non-negative Schwartz class functions supported in the slightly bigger ball $\{ |\xi| \leq 4.2^{L} \}$ and  spherical shell $\{ 2^{-L} < |\xi| < 4. 2^{L}\}$ respectively (with $L\geq 1$ is to be determined shortly),
such that
$$\tilde{\eta}_0\eta_0=\eta_0, \qquad \tilde{\eta}\eta=\eta, $$
$$ \eta_j(\xi)=\eta(2^{-j}\xi), \qquad \tilde{\eta}_j(\xi)=\tilde{\eta}(2^{-j}\xi)\qquad \qquad \textrm{ for }j \geq 1,$$ and 
$$\eta_0(\xi) + \sum_{j=1}^{\infty} \eta_j(\xi) =\tilde{\eta}_0(\xi) + \sum_{j=1}^{\infty} \tilde{\eta}_j(\xi) =1.$$ 

For $k \geq0$ and a Schwartz function $g$, let $L_kg$ and $\tilde{L}_kg$ be the Littlewood-Paley projections of $g$ given by
$$ \widehat{L_k(g)}(\xi)=\widehat{g}(\xi) \eta_k(\xi), \qquad \textrm{ and }\qquad \widehat{\tilde{L}_k(g)}(\xi)=\widehat{g}(\xi) \tilde{\eta}_k(\xi).$$

We can now write
\begin{equation}
    \label{eq LP dec}
    \mathcal{T}_qf=\sum_{k\geq 0} \tilde{L}_k\mathcal{T}_q f=\sum_{k\geq 0}\left(\tilde{L}_k\mathcal{T}_q L_kf+E_kf\right),
\end{equation}
where for each $k\geq 0$, the error term $E_kf$ is given by
$$E_k f:=\sum_{j\geq 0:\, j\neq k}\tilde{L}_k\mathcal{T}_q L_jf.$$
Given any $M>0$,  we can choose a large enough $L$ such that by repeated integration by parts, we obtain
\begin{equation}
    \label{eq LP err est}
    \|\widehat{E_kf}\times <\cdot>^{\frac{r(\al)}{2}})\|_2\lesssim_M 2^{-kM},
\end{equation}
with the implied constant independent of $k$ (see for example,  \cite[Lemma 3.1]{IT} or \cite[Lemma 2.1]{seeger93}).

For a Schwartz function $g$, we also have the Littlewood-Paley inequality
\begin{equation}
    \label{eq LP ineq}
    \Big\| \sum_{k\ge 0} \tilde{L}_k g\Big \|_2\lesssim \Big\|\Big(\sum_{k\ge 0} | \tilde{L}_kg|^2\Big)^{1/2}\Big  \|_2.
\end{equation}
Using \eqref{eq LP dec}, the triangle inequality and \eqref{eq LP err est}, we can thus estimate
\begin{align*}
    &\|(\widehat{\mathcal{T}_qf}\times <\cdot>^{\frac{r(\al)}{2}})\|_2
\\ &\leq \|\sum_{k\geq 0}\widehat{\tilde{L}_k\mathcal{T}_q L_kf}\times <\cdot>^{\frac{r(\al)}{2}})\|_2+\sum_{k\geq 0}\|(\widehat{E_kf}\times <\cdot>^{\frac{r(\al)}{2}})\|_2 
\\ & \leq \|\sum_{k\geq 0}\widehat{\tilde{L}_k\mathcal{T}_q L_kf}\times <\cdot>^{\frac{r(\al)}{2}})\|_2+1.
\end{align*}
To bound the first term above, note that due to \eqref{eq fin energy assump}, we have
\begin{equation}
\label{eq energy LP}
    \sum_{k\geq 0}2^{-k{r(\alpha)}}\|\widehat{L_kf}\|_2^2\sim \|\hat{f}\times<\cdot>^{-\frac{r(\al)}{2}}) \|_2^2\lesssim 1. 
\end{equation}

We can thus estimate
\begin{align*}
    &\|\sum_{k\geq 0}\widehat{\tilde{L}_k\mathcal{T}_q L_kf}\times <\cdot>^{\frac{r(\al)}{2}})\|_2\\
    &\lesssim \left(\sum_{k\geq 0}2^{kr(\alpha)}\int |\widehat{\mathcal{T}_qL_kf}(\xi)|^2\, d\xi\right)^{1/2} \qquad(\text{using }\eqref{eq LP ineq})
    \\&\lesssim \left(\sum_{k\geq 0}2^{kr(\alpha)}2^{-2kr(\alpha)}\int |\widehat{L_kf}(\xi)|^2\, d\xi\right)^{1/2} (\text{using the mapping property of } \mathcal{T}_q\,  \eqref{mapping comb})
    \\&\leq\left(\sum_{k\geq 0}2^{-kr(\alpha)}\int |\widehat{L_kf}(\xi)|^2\, d\xi\right)^{1/2} 
    \\&\lesssim 1,\qquad\text{using }\eqref{eq energy LP}.
\end{align*}
This finishes the proof.
\end{proof}

\section{Proof of the Rank Proposition \ref{prop rank}}
\label{sec rank calc}
In this section, we shall use results about matrix inverses and determinants from \cite{sri24}[Section 6] to determinant the exact rank of the matrix $M(\Phi)$ for different values of $\alpha$, and in particular, in the Kor\'anyi sphere case when $\alpha=4$. 
Given an even  integer $\alpha\geq 2$, we define the function $\Psi: \mathbb{R}^D\to \mathbb{R}$ to be
\begin{equation}
    \label{eq psi def}
    \Psi(z):=\|z\|_{\alpha}^{\alpha}=|\uz|^\alpha+C_{\alpha}|\oz|^{\alpha/2}.
\end{equation}
For ease of notation, we shall use $A$ to denote the constant $C_\alpha$ chosen so that the triangle inequality with respect to the Heisenberg group law is satisfied for the norm $\|\cdot\|_{\alpha}$.
The function $\Phi: \mathbb{R}^D\times \mathbb{R}^D\to \mathbb{R}$ can thus be expressed as
\begin{equation}
    \label{eqn: phase}
    \Phi(x,y)
    =|\ux-\uy|^{\alpha}+A|\ox-\oy+\frac{1}{2}\ux^\intercal J\uy|^{\alpha/2}=\Psi (\Theta(x, y)),
\end{equation}
where we let $\Theta:\mathbb{R}^{D}\times\mathbb{R}^{D}\to \mathbb{R}^{D}$ denote the Heisenberg group translation map
\begin{equation*}
    \label{eq H law}
    \Theta(x,y):=
    \left(\ux-\uy,\ox-\oy+\tfrac{1}{2}\ux^{\intercal}J\uy\right).
\end{equation*}
In the calculation to follow, we shall use the notation 
\begin{equation}
    \label{def mix Hess xy}
    \Phi_{xy}^{''}(x, y):= 
    \frac{\partial^2 \Phi}{\partial x \partial y}(x, y)=\begin{pmatrix}
    \frac{\partial^2 \Phi}{\partial x_i \partial y_j}(x, y)
\end{pmatrix}_{1\leq i, j\leq 2n+1}
\end{equation} for the mixed Hessian of $\Phi$.
For $\alpha\neq 4$, the $2n$-dimensional minor
\begin{equation}
    \label{def mix Hess minor}
    \Phi_{\ux\uy}^{''}(x, y):= 
    \frac{\partial^2 \Phi}{\partial \ux \partial \uy}(x, y)=\begin{pmatrix}
    \frac{\partial^2 \Phi}{\partial x_i \partial y_j}(x, y)
\end{pmatrix}_{1\leq i, j\leq 2n}
\end{equation}
shall play a vital role.

\subsection{Initial Simplification}
\label{section trans invar}

The following lemma simplifies the calculation for the Monge-Ampere matrix of $\Phi$, expressing it in terms of the gradient and Hessian of $\Psi$. All vectors should be interpreted as column matrices.

\begin{lemma}
\label{lem trans invar}
 Let $\Phi, \Psi$ and $\Theta$ be as defined above. Then
\[\mathrm{rank}\, M(\Phi)(x, y)=\mathrm{rank}\,N(\Psi)( \Theta(x,y)),\]
where
\begin{equation}
    \label{eq def N psi}
    N(\Psi)(z):=\begin{pmatrix}
0 & & \left(\nabla_{2n}\Psi(z)\right)^{\intercal} & & \Psi'_{D}(z)\\
\nabla_{2n}\Psi(z) & &D_{2n}^2\Psi(z)+\tfrac{1}{2}\Psi_{D}'(z)J& &\nabla_{2n}\Psi'_{D}(z)\\
\Psi'_{D}(z) & & \nabla_{2n}\Psi'_{D}(z)& &\Psi''_{DD}(z)
\end{pmatrix}.
\end{equation}
\end{lemma}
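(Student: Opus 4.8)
The plan is to prove Lemma~\ref{lem trans invar} by a change-of-variables argument exploiting the factorization $\Phi = \Psi \circ \Theta$ from \eqref{eqn: phase}. The key point is that $M(\Phi)(x,y)$ and $N(\Psi)(\Theta(x,y))$ differ by multiplication on the left and right by invertible block matrices built out of the Jacobian of $\Theta$; since left/right multiplication by invertible matrices preserves rank, the claimed rank equality follows. First I would compute the partial Jacobians of $\Theta$: writing $\Theta(x,y) = (\ux - \uy,\ \ox - \oy + \tfrac12 \ux^{\intercal}J\uy)$, one has $\partial_x \Theta = \begin{pmatrix} I_{2n} & \tfrac12 J\uy \\ 0 & 1 \end{pmatrix}$ and $\partial_y \Theta = \begin{pmatrix} -I_{2n} & -\tfrac12 J^{\intercal}\ux \\ 0 & -1 \end{pmatrix}$ (up to the precise bookkeeping of how $\tfrac12\ux^\intercal J \uy$ depends on $\ux$ and $\uy$ separately, using $J^\intercal = -J$). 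Both are unipotent-type and hence invertible.

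Next I would apply the chain rule to express the blocks of $M(\Phi)$ in terms of those of $N(\Psi)$. We have $\nabla_x \Phi = (\partial_x\Theta)^{\intercal}(\nabla\Psi)\circ\Theta$ and $\nabla_y \Phi = (\partial_y\Theta)^{\intercal}(\nabla\Psi)\circ\Theta$, while the mixed Hessian satisfies $\Phi''_{xy} = (\partial_x\Theta)^{\intercal}\,(D^2\Psi\circ\Theta)\,(\partial_y\Theta) + (\text{terms from } \partial^2\Theta/\partial x\partial y \text{ contracted against } \nabla\Psi\circ\Theta)$. The only nonzero second derivatives of $\Theta$ come from the bilinear term $\tfrac12\ux^\intercal J \uy$, which contributes a constant ($x,y$-independent) matrix proportional to $J$, multiplied by $\Psi'_D\circ\Theta$ — this is precisely the source of the $\tfrac12 \Psi'_D(z) J$ summand appearing in the central block of $N(\Psi)$ in \eqref{eq def N psi}. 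Assembling these, I would exhibit block matrices $P = \begin{pmatrix} 1 & 0 \\ 0 & (\partial_x\Theta)^{\intercal}\end{pmatrix}$ and $R = \begin{pmatrix} 1 & 0 \\ 0 & \partial_y\Theta \end{pmatrix}$ (with suitable sign/transpose conventions) such that $M(\Phi)(x,y) = P \cdot \widetilde N \cdot R$ for a matrix $\widetilde N$ equal to $N(\Psi)(\Theta(x,y))$ after reconciling the two blockings — i.e.\ after noting that $N(\Psi)$ as written groups the last coordinate separately but is the same $(D+1)\times(D+1)$ matrix as the one with blocks $\begin{pmatrix} 0 & (\nabla\Psi)^\intercal \\ \nabla\Psi & D^2\Psi + \tfrac12\Psi'_D J\end{pmatrix}$. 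Since $\det P, \det R \neq 0$, Sylvester's rank inequality (or just: rank is invariant under pre/post multiplication by invertible matrices) yields $\mathrm{rank}\,M(\Phi)(x,y) = \mathrm{rank}\,N(\Psi)(\Theta(x,y))$.

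The main obstacle I anticipate is purely bookkeeping rather than conceptual: correctly tracking how the mixed partials distribute, in particular getting the factor of $\tfrac12$ and the matrix $J$ (versus $J^\intercal$) right in the central block, and verifying that the extra Hessian-of-$\Theta$ term really does collapse to exactly $\tfrac12\Psi'_D(\Theta(x,y))\,J$ with no dependence on $x$ or $y$ beyond the evaluation point. One must be careful that $\Theta$ is \emph{not} simply a linear map, so the chain rule for second derivatives genuinely contributes the $\nabla\Psi \cdot D^2\Theta$ term; but because $D^2\Theta$ is constant this term is harmless and in fact gives the signature of the Heisenberg group law in the formula. A secondary, minor subtlety is the sign coming from $\partial_y\Theta$ having $-I_{2n}$ and $-1$ on the diagonal, which contributes an overall sign to the determinant but (as the lemma only asserts a statement about \emph{rank}) is irrelevant here; I would simply remark that it does not affect the conclusion. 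Once the block decomposition $M(\Phi) = P\,N(\Psi)\circ\Theta\,R$ is in hand, the proof concludes in one line.
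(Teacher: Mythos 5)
Your proposal is correct and follows essentially the same route as the paper's proof: compute the partial Jacobians of $\Theta$, apply the chain rule to the gradient and to the mixed Hessian (with the constant second derivative of $\Theta$, coming from the bilinear term, producing the $\tfrac12\Psi'_D\,J$ summand), and exhibit $M(\Phi)(x,y)$ as a product of $N(\Psi)(\Theta(x,y))$ with invertible bordered Jacobian factors, so that the ranks coincide. The one bookkeeping point you flag is real but harmless: with these conventions the absorbed term enters the central block as $-\tfrac12\Psi'_D J$ rather than $+\tfrac12\Psi'_D J$, i.e. one obtains the transpose of $N(\Psi)$, which has the same rank (and determinant), so the conclusion is unaffected, exactly as you anticipate.
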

\begin{proof}
Let $\Theta_{x}'$ denote the gradient of $\Theta$ with respect to the $x$ variable. Differentiating \eqref{eqn: phase} and using the chain rule, we have 
\begin{equation}
\label{eqn grad calc}
\nabla_{x}\Phi(x, y)= \Theta_{x}'\cdot\nabla\Psi\big|_{\Theta(x, y)}=
\begin{pmatrix}
I_{2n}& & \tfrac{1}{2}J\uy\\
0& & 1
\end{pmatrix}\cdot \nabla\Psi(\Theta(x, y)),
\end{equation}
where $J$ is as in \eqref{grouplaw}. Set $z=\Phi (x, y)$. Expanding the matrix product on the right gives
\[\nabla_{x}\Phi(x, y)=\begin{pmatrix}
\nabla_{2n}\Psi(z)+\tfrac{1}{2}\Psi'_{D}(z)J\uy\\ 
\Psi'_{D}(z)
\end{pmatrix},
\]
Here $\nabla_{2n}\Psi$ denotes the partial derivative of $\Psi$ with respect to the first $2n$ coordinates and $\Psi'_{D}$
denotes its derivatives with respect to the $D=2n+1$-th coordinate.

Next, we take derivatives with respect to the $y$ variable. Another application of chain rule, recalling that $J^{\intercal}=-J$, yields

\begin{align}
&\Phi''_{x,y}(x, y)=\nonumber\\
&\begin{pmatrix}
D_{2n}^2\Psi(z)+\tfrac{1}{2}J\uy\left(\nabla_{2n}\Psi'_{D}(z)\right)^{\intercal}+\tfrac{1}{2}\Psi_{D}'(z)J& &\nabla_{2n}\Psi'_{D}(z)+\tfrac{1}{2}\Psi''_{DD}(z)J\uy\\
\nabla_{2n}\Psi'_{D}(z)& &\Psi''_{DD}(z)
\end{pmatrix}\begin{pmatrix}
-I_{2n}& & 0\\
-\tfrac{1}{2}(J\ux)^{\intercal}& & -1
\end{pmatrix}\nonumber\\
&=\begin{pmatrix}
I_{2n}& & \tfrac{1}{2}J\uy\\
0& & 1\end{pmatrix}\begin{pmatrix}
D_{2n}^2\Psi(z)+\tfrac{1}{2}\Psi_{D}'(z)J& &\nabla_{2n}\Psi'_{D}(z)\\
\nabla_{2n}\Psi'_{D}(z)& &\Psi''_{DD}(z)
\end{pmatrix}\begin{pmatrix}
-I_{2n}& & 0\\
-\tfrac{1}{2}(J\ux)^{\intercal}& & -1
\end{pmatrix}.
\label{eq matrix commut}
\end{align}
Here $D_{2n}^2$ denotes the Hessian of $\Psi$ with respect to the first $2n$ coordinates. 
We also have
\[\nabla_{x}\Phi(x, y)=\begin{pmatrix}
\nabla_{2n}\Psi(z)+\tfrac{1}{2}\Psi'_{D}(z)J\uy\\ 
\Psi'_{D}(z)
\end{pmatrix},
\]
and
\[\nabla_{y}\Phi(x, y)=\begin{pmatrix}
-\nabla_{2n}\Psi(z)-\tfrac{1}{2}\Psi'_{D}(z)J\ux\\ 
-\Psi'_{D}(z)
\end{pmatrix}.
\]
The above equations can be combined with \eqref{eq matrix commut} and expressed together in the matrix form as
\begin{align}
\label{eq matrix conj MA}
    &\begin{pmatrix}
     0& \left(\nabla_y\Phi (x, y)\right)^{\intercal}\\
        \nabla_x\Phi (x, y)& \frac{\partial^2\Phi}{\partial_x\partial_y}(x, y)   
    \end{pmatrix}=\nonumber \\
&\begin{pmatrix}
1& & 0& & 0\\
0 & & I_{2n}& & \tfrac{1}{2}J\uy\\
0& &0& & 1\end{pmatrix}
N(\Psi)(z)\begin{pmatrix}
1& & 0& & 0\\
0& &-I_{2n}& & 0\\
0& &-\tfrac{1}{2}(J\ux)^{\intercal}& & -1
\end{pmatrix},
\end{align}
where
$$N(\Psi)(z):=\begin{pmatrix}
0 & & \left(\nabla_{2n}\Psi(z)\right)^{\intercal} & & \Psi'_{D}(z)\\
\nabla_{2n}\Psi(z) & &D_{2n}^2\Psi(z)+\tfrac{1}{2}\Psi_{D}'(z)J& &\nabla_{2n}\Psi'_{D}(z)\\
\Psi'_{D}(z) & & \nabla_{2n}\Psi'_{D}(z)& &\Psi''_{DD}(z)
\end{pmatrix}.$$

Since the first and last matrices in the product in \eqref{eq matrix conj MA} have non-zero determinant, it follows that the rank of $\Phi''_{x,y}(x, y)$ is the same as the rank of 
$N(\Psi)(z)$.

This finishes the proof. 
\end{proof}
\subsection{Calculating the Derivatives}
We now focus on the matrix $N(\Psi)(y)$, for $y\in \mathbb{R}^D$ with $\|y\|_{\alpha}=t$. To avoid carrying the $\textrm{sign}$ notation everywhere, we shall assume without loss of generality that $\oy\geq 0$. Recall $\Psi$ from \eqref{eq psi def}. Its first derivatives are given by
\begin{equation}
    \label{eq psi first deriv}
    \nabla_{2n}\Psi(y)= \alpha|\uy|^{\alpha-2}\uy, \qquad \Psi'_D(y)=\frac{A\alpha}{2}\oy^{\frac{\alpha-2}{2}}.
\end{equation}

Calculating the second derivatives, for $\alpha\geq 4$, we obtain that $N(\Psi)(y)$ is equal to
\begin{equation}
    \label{eq Npsi alpha4}
    \begin{pmatrix}
0 & & \alpha|\uy|^{\alpha-2}\uy^{\intercal} & & \frac{A\alpha}{2}\oy^{\frac{\alpha-2}{2}}\\
\alpha|\uy|^{\alpha-2}\uy & &\mathfrak{P}(y) & & 0_{2n}\\
\frac{A\alpha}{2}\oy^{\frac{\alpha-2}{2}} & & 0_{2n}^{\intercal}& &\frac{A\alpha(\alpha-2)}{4}(\oy)^{\frac{\alpha-4}{2}}
\end{pmatrix},
\end{equation}
where 
\begin{equation}
    \label{eq def matrix P}
    \mathfrak{P}(y):=\alpha(\alpha-2)|\uy|^{\alpha-4}\uy^\intercal\uy+\alpha|\uy|^{\alpha-2}I_{2n}+\frac{A\alpha}{4}(\oy)^{\frac{\alpha-2}{2}}J.
\end{equation}
The case $\alpha=2$ is even simpler and we have
\begin{equation}
\label{eq Npsi al2}
    N(\Psi)(y)=\begin{pmatrix}
0 & & 2\uy^{\intercal} & & A\\
2\uy & & 2I_{2n}+\frac{A}{2}J& & 0_{2n}\\
A & & 0_{2n}^{\intercal}& &0
\end{pmatrix}.
\end{equation}
It is clear that for $\alpha>4$, $N(\Psi)$ can have rank at most $2n$ at the equator (when $\oy=0$). For $\alpha=2$ or $\alpha=4$, the matrix $N(\Psi)(y)$ can attain full rank. The exact rank depends on the invertibility of the matrix $\mathfrak{P}(y)$
for $\alpha\geq 4$,
and of the matrix 
\begin{equation}
    \label{eq: matrix alpha 2}
    2I_{2n}+\frac{A}{2}J
\end{equation}
when $\alpha=2$. 
\subsection{Invertibility of Matrices}
Recall that $J$ is the standard symplectic matrix of order $2n$ given by
\[J=\begin{pmatrix}0&I_{n}\\-I_n&0\end{pmatrix}.\]
In particular, $J$ is skew-symmetric, $J^2=-I_{2n}$ and $\|J\|=1$ (the norm here denotes the matrix norm).

Next we have an identity for the inverses of matrices of the form $J+g''(w)$ where $J$ is a skew-symmetric matrix of even dimension $d$ with $J^2=-I_{d}$ and $w\in \mathbb{R}^{d}$. A more general version of the following lemma, and the two subsequent corollaries, were proven by the first author in \cite{sri24}.

\begin{lemma}[\cite{sri24}, Lemma 6.2]
\label{lem matrix inv}
Let $J$ be a skew-symmetric matrix with $J^2=-I_{d}$ and let $w\in\mathbb{R}^{d}$. For any $\sigma,\lambda, \kappa\in \mathbb{R}$, we have
\begin{multline}
    \label{eq:invform}
    (\sigma I_{d}+\lambda ww^{\intercal}+\kappa J)^{-1}\\=\frac{1}{\sigma^2+\kappa^2}\bigg[\sigma I_{d}-\kappa J+\frac{(-\sigma\lambda)}{\sigma^2+\kappa^2+\sigma\lambda|w|^2}\left(\sigma ww^{\intercal}-\kappa(Jw)w^{\intercal}\right)\\
    +\frac{\lambda\kappa}{\sigma^2+\kappa^2+\sigma\lambda|w|^2}\left(-\sigma w(Jw)^{\intercal}+\kappa(Jw)(Jw)^{\intercal}\right)\bigg].
\end{multline}

\end{lemma}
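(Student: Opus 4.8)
The plan is to treat the matrix $\sigma I_{d}+\lambda w w^{\intercal}+\kappa J$ as a rank-one perturbation of $A:=\sigma I_{d}+\kappa J$ and to apply the Sherman--Morrison formula. The first step is to record the explicit inverse of $A$: since $J^{2}=-I_{d}$ we have $(\sigma I_{d}+\kappa J)(\sigma I_{d}-\kappa J)=(\sigma^{2}+\kappa^{2})I_{d}$, so $A^{-1}=\tfrac{1}{\sigma^{2}+\kappa^{2}}(\sigma I_{d}-\kappa J)$ whenever $\sigma^{2}+\kappa^{2}\neq0$ (which is automatic in our application, where $\sigma>0$). This already explains both the prefactor $\tfrac{1}{\sigma^{2}+\kappa^{2}}$ and the $\sigma I_{d}-\kappa J$ term in the claimed formula \eqref{eq:invform}.

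Next I would write $\sigma I_{d}+\lambda w w^{\intercal}+\kappa J=A+\lambda w w^{\intercal}$ and invoke the identity $(A+\lambda w w^{\intercal})^{-1}=A^{-1}-\tfrac{\lambda\,A^{-1}w\,w^{\intercal}A^{-1}}{1+\lambda\,w^{\intercal}A^{-1}w}$. The ingredients are then computed directly: $A^{-1}w=\tfrac{1}{\sigma^{2}+\kappa^{2}}(\sigma w-\kappa Jw)$; skew-symmetry of $J$ gives $w^{\intercal}Jw=0$, hence $w^{\intercal}A^{-1}w=\tfrac{\sigma|w|^{2}}{\sigma^{2}+\kappa^{2}}$ and therefore $1+\lambda w^{\intercal}A^{-1}w=\tfrac{\sigma^{2}+\kappa^{2}+\sigma\lambda|w|^{2}}{\sigma^{2}+\kappa^{2}}$, which is exactly the source of the second denominator $\sigma^{2}+\kappa^{2}+\sigma\lambda|w|^{2}$ in the statement. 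Finally, using $(Jw)^{\intercal}=-w^{\intercal}J$, the outer product $A^{-1}w\,w^{\intercal}A^{-1}$ expands into the four rank-one blocks $ww^{\intercal}$, $(Jw)w^{\intercal}$, $w(Jw)^{\intercal}$ and $(Jw)(Jw)^{\intercal}$; substituting everything back and collecting terms reproduces precisely \eqref{eq:invform}.

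As a sanity check — and in fact as an alternative, fully self-contained route — one can forget where the formula comes from and simply multiply the claimed right-hand side of \eqref{eq:invform} by $\sigma I_{d}+\lambda w w^{\intercal}+\kappa J$, verifying directly that the product is $I_{d}$. This reduces to repeated use of $J^{2}=-I_{d}$, the orthogonality relation $w^{\intercal}Jw=0$, and the bilinear identities relating $J(ww^{\intercal})$, $(ww^{\intercal})J$, $(Jw)(Jw)^{\intercal}$ and $w(Jw)^{\intercal}$.

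The only real obstacle is the algebraic bookkeeping: one must carefully track the four rank-one matrices and their transposes, and keep straight the two distinct denominators $\sigma^{2}+\kappa^{2}$ and $\sigma^{2}+\kappa^{2}+\sigma\lambda|w|^{2}$ together with the (harmless, since $\sigma>0$) question of when they vanish. Once the normalizations are pinned down, every step is a routine computation, so the proof is short.
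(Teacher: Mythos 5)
Your argument is correct, and it is a genuinely different (and more structured) route than the paper's. The paper disposes of this lemma in one line: it asserts that the identity ``can be verified directly,'' using only $J^{2}=-I_{d}$, $ww^{\intercal}ww^{\intercal}=|w|^{2}ww^{\intercal}$ and $w^{\intercal}Jw=0$ --- i.e.\ exactly the sanity check you list as your alternative route. Your primary route instead \emph{derives} the formula by Sherman--Morrison applied to the rank-one perturbation of $A=\sigma I_{d}+\kappa J$, and the computation checks out: $A^{-1}=\tfrac{1}{\sigma^{2}+\kappa^{2}}(\sigma I_{d}-\kappa J)$, $w^{\intercal}A^{-1}w=\tfrac{\sigma|w|^{2}}{\sigma^{2}+\kappa^{2}}$, and expanding $A^{-1}ww^{\intercal}A^{-1}=\tfrac{1}{(\sigma^{2}+\kappa^{2})^{2}}(\sigma w-\kappa Jw)(\sigma w^{\intercal}+\kappa(Jw)^{\intercal})$ reproduces the four rank-one blocks with exactly the signs in \eqref{eq:invform}. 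The payoff of your derivation is that it explains the provenance of the two denominators and of the block structure, rather than just confirming it; the payoff of the paper's verification is brevity, and also that it establishes the identity directly whenever both denominators are nonzero, without needing the Sherman--Morrison hypotheses as a separate input (though for square matrices these amount to the same nonvanishing conditions). Two small remarks: the lemma's ``for any $\sigma,\lambda,\kappa$'' must of course be read as assuming $\sigma^{2}+\kappa^{2}\neq 0$ and $\sigma^{2}+\kappa^{2}+\sigma\lambda|w|^{2}\neq 0$, as you note; and your parenthetical ``automatic in our application, where $\sigma>0$'' is slightly off, since in the application $\sigma=\alpha|\uy|^{\alpha-2}$ vanishes at the poles $\uy=0$ --- there one instead has $\kappa\neq 0$, so $\sigma^{2}+\kappa^{2}>0$ still holds, which is all that is needed.
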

\begin{proof}
This can be verified directly, using the facts that $J^2=-I_{d}$, $ww^\intercal w w^\intercal=|w|^2ww^\intercal$ and $w^\intercal J w=0$ (due to $J$ being skew-symmetric).
\end{proof}
The next corollary follow almost immediately.
\begin{corollary}[\cite{sri24}, Corollary 6.3]
\label{cor matrix inv est}
Suppose there exist positive constants $c, C$ such that \[c<\min\,\{\sigma^2+\kappa^2, \sigma^2+\kappa^2+\sigma\lambda|w|^2\}\] and \[\max\{|\alpha|, |\lambda|, |\kappa|, |w|\}<C.\] Then 
the entries of the inverse of $\sigma I_{d}+\lambda ww^{\intercal}+\kappa J$ are bounded above by an absolute constant depending on $C$ and $c$. We can thus conclude that 
\[\|(\sigma I_{d}+\lambda ww^{\intercal}+\kappa J)^{-1}\|\lesssim_{c,C,d} 1,\]
where the norm above denotes the spectral norm of the matrix.
We also have that
\begin{equation*}
\label{eq inv matrix est}
    |\det\left(\sigma I_{d}+\lambda ww^{\intercal}+\kappa J\right)^{-1}|\lesssim_{c, C, d} 1.
\end{equation*}
and thus
\begin{equation*}
    |\det\left(\sigma I_{d}+\lambda ww^{\intercal}+\kappa J\right)|\gtrsim_{c, C, d} 1.
\end{equation*}
\end{corollary}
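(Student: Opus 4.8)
The plan is to read the estimates off directly from the explicit inversion formula \eqref{eq:invform} of Lemma \ref{lem matrix inv}. First I would note that the hypotheses make the matrix $A:=\sigma I_{d}+\lambda ww^{\intercal}+\kappa J$ genuinely invertible: the right-hand side of \eqref{eq:invform} is well defined because the two denominators $\sigma^2+\kappa^2$ and $\sigma^2+\kappa^2+\sigma\lambda|w|^2$ both exceed $c>0$, and the computation in the proof of Lemma \ref{lem matrix inv} shows it is the two-sided inverse of $A$.

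Next I would bound the six terms of \eqref{eq:invform} one at a time. Each term is a scalar coefficient times one of the matrices $I_{d}$, $J$, $ww^{\intercal}$, $(Jw)w^{\intercal}$, $w(Jw)^{\intercal}$, $(Jw)(Jw)^{\intercal}$. Every scalar coefficient is a rational function whose numerator is a polynomial in $\sigma,\lambda,\kappa,|w|^2$ and whose denominator is a product of the two quantities assumed $\ge c$; hence by the first hypothesis the denominators are $\gtrsim_{c}1$, while by the second hypothesis ($|\sigma|,|\lambda|,|\kappa|,|w|<C$) the numerators are $\lesssim_{C}1$, so each coefficient is bounded in modulus by a constant depending only on $c$ and $C$. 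For the matrix factors, $I_{d}$ and $J$ have entries in $\{0,\pm1\}$, and the entries of $ww^{\intercal}$, $(Jw)w^{\intercal}$, $w(Jw)^{\intercal}$, $(Jw)(Jw)^{\intercal}$ are products of two coordinates of $w$ or of $Jw$, hence bounded by $|w|^2<C^2$ (using $|Jw|=|w|$). Combining, every entry of $A^{-1}$ is bounded above by a constant depending only on $c$ and $C$, which is the first assertion.

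The spectral-norm bound then follows from $\|A^{-1}\|\le\|A^{-1}\|_{\mathrm{F}}=\big(\sum_{i,j}|(A^{-1})_{ij}|^2\big)^{1/2}$, which turns the entrywise bound into $\|A^{-1}\|\lesssim_{c,C,d}1$ (the $d$-dependence entering only through the count $d^2$ of entries). For the determinant estimates I would use $|\det A^{-1}|\le\|A^{-1}\|^{d}\lesssim_{c,C,d}1$ and then $|\det A|=|\det A^{-1}|^{-1}\gtrsim_{c,C,d}1$, which is legitimate since $\det A^{-1}\neq0$. There is no genuine obstacle here; the only point requiring care is bookkeeping — verifying that each denominator appearing in \eqref{eq:invform} is indeed one of the two controlled quantities, and that $w$ enters each term at most quadratically so that no high power of $|w|$ slips in. One could equally avoid entrywise estimates altogether and argue purely with submultiplicativity of the spectral norm together with $\|J\|=1$ and $\|ww^{\intercal}\|=|w|^2$, at no extra cost.
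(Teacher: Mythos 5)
Your argument is correct and is exactly the route the paper intends: the corollary is stated as following "almost immediately" from the explicit inversion formula of Lemma \ref{lem matrix inv}, by bounding the scalar coefficients (denominators $\geq c$, numerators controlled by $C$) and the entries of $I_d$, $J$, and the rank-one factors, then passing to the spectral norm and the determinant just as you do. The only cosmetic point is that the hypothesis "$|\alpha|<C$" in the statement is a typo for "$|\sigma|<C$", which you have implicitly (and correctly) read that way.
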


Observe that $(Jw)^{\intercal}w=0$ as $J$ is skew-symmetric. Thus, we can conclude the following using \eqref{eq:invform}.
\begin{corollary}[\cite{sri24}, Corollary 6.4]
\label{cor inn prod1}
We have
\begin{align*}
&(\sigma I_{d}+\lambda ww^{\intercal}+\kappa J)^{-1}w\\
&=\frac{1}{\sigma^2+\kappa^2}\bigg[(\sigma I_{d}-\kappa J)w+\frac{(-\sigma\lambda)}{\sigma^2+\sigma\lambda|w|^2+\kappa^2} (\sigma ww^{\intercal}- \kappa(Jw)w^{\intercal})w \bigg]\\
&=\frac{1}{\sigma^2+\sigma\lambda|w|^2+\kappa^2}(\sigma I_{d}-\kappa J)w.
\end{align*}
\end{corollary}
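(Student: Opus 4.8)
The plan is to apply the inversion formula \eqref{eq:invform} of Lemma \ref{lem matrix inv} directly to the vector $w$ and to observe that most of the terms collapse. The only inputs needed are the elementary identities $w^{\intercal}w=|w|^2$, $w^{\intercal}Jw=0$, and, crucially, $(Jw)^{\intercal}w=0$, the latter two being consequences of the skew-symmetry of $J$.

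First I would evaluate each rank-one piece of \eqref{eq:invform} on $w$: one has $(ww^{\intercal})w=|w|^2 w$ and $((Jw)w^{\intercal})w=|w|^2 Jw$, whereas $(w(Jw)^{\intercal})w=((Jw)^{\intercal}w)\,w=0$ and $((Jw)(Jw)^{\intercal})w=((Jw)^{\intercal}w)\,Jw=0$. Consequently the entire $\lambda\kappa$-bracket of \eqref{eq:invform} annihilates $w$, and what remains is exactly the middle expression displayed in the statement, namely
\[
\frac{1}{\sigma^2+\kappa^2}\Big[(\sigma I_{d}-\kappa J)w+\frac{-\sigma\lambda}{\sigma^2+\kappa^2+\sigma\lambda|w|^2}\big(\sigma|w|^2 w-\kappa|w|^2 Jw\big)\Big].
\]

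Next I would simplify by factoring $\sigma|w|^2 w-\kappa|w|^2 Jw=|w|^2(\sigma I_{d}-\kappa J)w$, pulling $(\sigma I_{d}-\kappa J)w$ out of the bracket, and combining the two scalar coefficients via
\[
\frac{1}{\sigma^2+\kappa^2}\Big(1-\frac{\sigma\lambda|w|^2}{\sigma^2+\kappa^2+\sigma\lambda|w|^2}\Big)=\frac{1}{\sigma^2+\kappa^2+\sigma\lambda|w|^2},
\]
which yields the claimed identity. Here one tacitly works in the regime $\sigma^2+\kappa^2\neq 0$ and $\sigma^2+\kappa^2+\sigma\lambda|w|^2\neq 0$ in which \eqref{eq:invform} is valid.

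I do not anticipate any genuine obstacle: the argument is purely mechanical once one notices that $(Jw)^{\intercal}w=0$ eliminates the two most complicated terms of \eqref{eq:invform}. The only care needed is bookkeeping — multiplying $w$ consistently on the right and applying the skew-symmetry identities to the correct factors.
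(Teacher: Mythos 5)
Your proposal is correct and follows exactly the route the paper takes: the corollary is obtained by applying the inversion formula \eqref{eq:invform} of Lemma \ref{lem matrix inv} to $w$ and using $(Jw)^{\intercal}w=0$ (skew-symmetry of $J$) to kill the $\lambda\kappa$-bracket, after which factoring out $(\sigma I_d-\kappa J)w$ and combining the scalar coefficients gives the stated identity. Your bookkeeping, including the caveat that $\sigma^2+\kappa^2$ and $\sigma^2+\kappa^2+\sigma\lambda|w|^2$ are nonzero, matches the paper's argument.
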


We now have all ingredients in place for:
\subsection{Concluding the Proof of the Rank Proposition \ref{prop rank}}
We shall assume without loss of generality that $\oy\geq 0$. We first show part (i). Using the chain rule for \eqref{eqn grad calc}, and a corresponding one for the gradient with respect to the $y$ variable, it suffices to show that
$$\nabla \Psi (y)\neq 0_{2n+1}$$
for $y\in \mathbb{R}^D$ with $\|y\|_{\alpha}=t$.
From \eqref{eq psi first deriv}, we have 
\begin{equation*}
    \nabla \Psi (y)=\begin{pmatrix}
     \alpha|\uy|^{\alpha-2}\uy\\ 
     \frac{A\alpha}{2}\oy^{\frac{\alpha-2}{2}}
    \end{pmatrix},
\end{equation*}
which is not the zero vector since $\|y\|_{\alpha}=t\neq 0$. This establishes part (i).

Next, we determine the rank of $M(\Phi)(x, y)$ for $\alpha\geq 2$. Due to Lemma \ref{lem trans invar}, it suffices to determine the rank of $N(\Psi)(y)$ instead, under the assumption that $\|y\|_{\alpha}=t$. We begin with the case $\alpha=2$, in which case, it is clear from \eqref{eq Npsi al2} that
$$\textrm{rank}\, N(\Psi)(y)= 2+\textrm{rank}\, \left(2I_{2n}+J\right).$$
Next, using Corollary \ref{cor matrix inv est} with 
$\sigma= 2$, $\kappa=1$ and $\lambda=0$, we conclude that the matrix $2I_{2n}+J$ is invertible, and therefore
$$\textrm{rank}\, N(\Psi)(y)= 2+2n=D+1.$$
This establishes part (ii).

Next, we deal with the cases when $\alpha=4$, or when $\alpha\geq 6$ and $|\oy|\neq 0$, which are slightly more involved. 

Recall the matrices $N(\Psi)(y)$ and $\mathfrak{P}(y)$ from \eqref{eq Npsi alpha4} and \eqref{eq def matrix P} in this case. First, using Corollary \ref{cor matrix inv est} with 
\begin{equation}
     \label{eq par choice}
     \sigma=\alpha |\uy|^{\alpha-2},\qquad \kappa=\frac{A\alpha}{4}(\oy)^{\frac{\alpha-2}{2}},\qquad \lambda=\alpha(\alpha-2)|\uy|^{\alpha-4},
\end{equation}
we conclude that the matrix $\mathfrak{P}(y)$ is invertible.

We now use the identity
\begin{equation}
    \label{eq mat id}
    \det \begin{pmatrix}
    \mathfrak{A} & \mathfrak{B}\\
    \mathfrak{C} & \mathfrak{D}
\end{pmatrix}= \det \left( \mathfrak{A}- \mathfrak{B}\mathfrak{D}^{-1}  \mathfrak{C}\right) \det \mathfrak{D},
\end{equation}
for a square matrix expressed in the block form, where $\mathfrak{A}$ and $\mathfrak{D}$ are also square matrices with $\mathfrak{D}$ being invertible.
Applying this for the matrix $N(\Psi)(y)$, 
we conclude that
$$\det N(\Psi)(y)= \mathfrak{X}(y)\,  \det \begin{pmatrix}
    \mathfrak{P}(y) & & 0_{2n}\\
    0_{2n}^{\intercal} & & \frac{A\alpha(\alpha-2)}{4}(\oy)^{\frac{\alpha-4}{2}}
\end{pmatrix},$$
where
\begin{equation}
    \label{eq def X}
    \mathfrak{X}(y):= \begin{pmatrix}
    \alpha|\uy|^{\alpha-2}\uy\\
    \frac{A\alpha}{2}\oy^{\frac{\alpha-2}{2}}
    \end{pmatrix}^{\intercal}\, \begin{pmatrix}
    \mathfrak{P}(y)^{-1} & & 0_{2n}\\
    0_{2n}^{\intercal} & & \left(\frac{A\alpha(\alpha-2)}{4}(\oy)^{\frac{\alpha-4}{2}}\right)^{-1}
\end{pmatrix}\, \begin{pmatrix}
    \alpha|\uy|^{\alpha-2}\uy\\
    \frac{A\alpha}{2}\oy^{\frac{\alpha-2}{2}}
\end{pmatrix}
\end{equation}

Thus in order to establish the invertiblilty of the matrix $N(\Psi)(y)$, it is enough to show that
\begin{equation}
    \label{eq X nzero}
    \mathfrak{X}(y)\neq 0.
\end{equation}
Using Corollary \ref{cor inn prod1} 
with the choice of parameters in \eqref{eq par choice}, we get
\begin{align*}
    \left(\sigma \uy\right)^{\intercal}\mathfrak{P}(y)^{-1}\left(\sigma \uy
    \right)= \frac{\sigma^2\uy^{\intercal}(\sigma I_{2n}-\kappa J)\uy}{\sigma^2+\sigma\lambda|\uy|^2+\kappa^2}
    = \frac{\sigma^3|\uy|^2}{\sigma^2+\sigma\lambda|\uy|^2+\kappa^2}= \frac{\alpha^3|\uy|^{3\alpha-4}}{\sigma^2+\sigma\lambda|\uy|^2+\kappa^2}
\end{align*}
and therefore
\begin{align*}
    \mathfrak{X}(y)=\frac{\alpha^3}{\sigma^2+\sigma\lambda|\uy|^2+\kappa^2}\, |\uy|^{3\alpha-4}+ \frac{A\alpha}{\alpha-2}\oy^{\frac{\alpha}{2}}>0,
\end{align*}
whenever $|\uy|^{\alpha}+A(\oy)^{\frac{\alpha}{2}}=t^{\alpha}>0$.

This establishes part (iii) (the case when $\alpha=4$) and also part (iv) for $\alpha\geq 6$ in the case when $\oy>0$.

Finally, we consider the case when $\alpha\geq 6$ and $\oy=0$. In this regime, the last row and columns of the matrix $N(\Psi)(y)$ from \eqref{eq Npsi alpha4} are zero, and its rank is equal to that of the $D$ dimensional submatrix
\begin{equation}
    \label{eq subNpsi alpha4}
    \widetilde{N}(\Psi)(\uy):=\begin{pmatrix}
0 & & \alpha|\uy|^{\alpha-2}\uy^{\intercal} \\
\alpha|\uy|^{\alpha-2}\uy & &\mathfrak{R}(\uy)
\end{pmatrix},
\end{equation}
where 
\begin{equation}
    \label{eq def matrix sub P}
    \mathfrak{R}(\uy):=\alpha(\alpha-2)|\uy|^{\alpha-4}\uy^\intercal\uy+\alpha|\uy|^{\alpha-2}I_{2n}.
\end{equation}
Note that $|\uy|\neq 0$ in this subregime. Using Corollary \ref{cor matrix inv est} again, with 
\begin{equation}
     \label{eq par choice 2}
     \sigma=\alpha |\uy|^{\alpha-2},\qquad \kappa=0,\qquad \lambda=\alpha(\alpha-2)|\uy|^{\alpha-4},
\end{equation}
we conclude that the matrix $\mathfrak{R}(\uy)$ is invertible. Next, we apply the identity \eqref{eq mat id} for the matrix $\widetilde{N}(\Psi)(\uy)$ to get
$$\det \widetilde{N}(\Psi)(y)= \mathfrak{Z}(\uy)\,  \det \mathfrak{R}(\uy)$$
where
\begin{equation}
    \label{eq def Z}
    \mathfrak{Z}(\uy):= \left(\alpha|\uy|^{\alpha-2}\uy\right)^{\intercal} \mathfrak{R}(\uy)^{-1}\left(\alpha|\uy|^{\alpha-2}\uy\right).
\end{equation}
Thus we will be done if show that $ \mathfrak{Z}(y)\neq 0$. The proof is exactly the same (in fact, simpler) as that of \eqref{eq X nzero}. Using Corollary \ref{cor inn prod1} 
with the choice of parameters in \eqref{eq par choice 2}, we get
\begin{align*}
    \mathfrak{Z}(\uy)=\left(\sigma \uy\right)^{\intercal}\mathfrak{R}(y)^{-1}\left(\sigma \uy
    \right)= \frac{\alpha^3|\uy|^{3\alpha-4}}{\sigma^2+\sigma\lambda|\uy|^2+\kappa^2}\neq 0,
\end{align*}
since $|\uy|^{\alpha}=t^{\alpha}>0$. Thus it follows that the matrix $\widetilde{N}(\Psi)(\uy)$ has rank $D=2n+1$. Consequently, the same holds true for ${N}(\Psi)(y)$ when $\oy=0$. This establishes part (iv), and finishes the proof.


\begin{thebibliography}{8}

\bibitem{BHRT} S. Bagchi, S. Hait, L. Roncal, and S. Thangavelu. On the maximal function associated to the spherical means on the Heisenberg group. \textit{New York J. Math}, \textbf{27}, 631-675.


\bibitem{CT} E. Campologo, K. Taylor, ``Lattice Points Close to the Heisenberg Spheres." \textit{La Matematica}, \textbf{2}, pp. 156-196, 2023.

%
\bibitem{Campo} E. G. Campolongo (2022). \textit{Lattice Point Counting through Fractal Geometry and Stationary Phase for Surfaces with Vanishing Curvature}, Doctoral Dissertation, The Ohio State University.
%
\bibitem{Chamizo} F. Chamizo, ``Lattice points in bodies of revolution." \textit{Acta Arith.} \textbf{85(3)}, pp. 265-277, 1998.
%


\bibitem{Cy} J. Cygan, \textit{Subadditivity of homogeneous norms on certain nilpotent Lie groups}, Proc. Amer. Math. Soc. \textbf{83(1)} (1981), 69-70.
%


\bibitem{GT} P. Ganguly and S. Thangavelu ``On the lacunary spherical maximal function on the Heisenberg group", \textit{Journal of Functional Analysis}, \textbf{280}(3), 108832.

\bibitem{GNT} 
R. Garg, A. Nevo, K. Taylor, ``The lattice point counting problem on the Heisenberg groups." \textit{Ann. Inst. Fourier, Grenoble} \textbf{65}, 5, pp. 2199-2233, 2015.


%
\bibitem{Gath2} Y. A. Gath, ``On an analogue of the Gauss circle problem for the Heisenberg groups", \textit{Ann. Sc. Norm. Super. Pisa Cl. Sci.} (5) {\bf 23} (2022), no.~2, 645--717




\bib{GathSharp}{article}{
   author={Gath, Yoav A.},
   title={The solution of the sphere problem for the Heisenberg group},
   journal={J. Ramanujan Math. Soc.},
   volume={35},
   date={2020},
   number={2},
   pages={149--157},
   issn={0970-1249},
   review={\MR{4108028}},
}



%

%


\bibitem{Hux96} M. N. Huxley, {\it Area, lattice points, and exponential sums}, London Mathematical Society Monographs. New Series, \textbf{13} Oxford Science Publications. The Clarendon Press, Oxford University Press, New York, (1996). 



\bibitem{ISS} A. Iosevich, E.~T. Sawyer and A. Seeger, {\it Two problems associated with convex finite type domains}, Publ. Mat. {\bf 46} (2002), no.~1, 153--177.

\bibitem{IT} A. Iosevich, K. Taylor, ``Lattice points close to families of surfaces, nonisotropic dilations and regularity of generalized Radon transforms." \textit{New York Journal of Mathematics} \textbf{17}, pp. 811-828, 2011.



\bibitem{Kor} A. Kor\'anyi, \textit{Geometric properties of Heisenberg-type groups}, Adv. Math. \textbf{56(1)} (1985), 28-38.


%
\bibitem{Krtl3} E. Kr\"{a}tzel, ``Lattice points in three-dimensional convex bodies with points of Gaussian curvature zero at the boundary." \textit{Monatsh. Math.} \textbf{137(3)}, pp. 197-211, 2002.
%
\bibitem{KN1} E. Kr\"{a}tzel, W. G. Nowak, ``The lattice discrepancy of bodies bounded by a rotating Lam\'{e}'s curve." \textit{Monatsh. Math.} \textbf{154(2)}, pp. 145-156, 2008.
%

%

\bibitem{L10} M.C. Lettington, ``Integer points close to convex hypersurfaces." \textit{Acta Arith.} \textbf{141}, pp. 73-101, 2010.



%
\bibitem{Mattila15} P. Mattila, \textit{Fourier Analysis and Hausdorff Dimension}. Cambridge University Press, 2015.
%
\bibitem{Nowak} W. G. Nowak, ``On the lattice discrepancy of bodies of rotation with boundary points of curvature zero." \textit{Arch. Math.} \textbf{90(2)}, pp. 181-192, 2008.
%
\bibitem{Peter1} M. Peter, ``The local contribution of zeros of curvature to lattice points asymptotics." \textit{Math. Z.} \textbf{233(4)}, pp. 803-815, 2000.


\bibitem{PS86} D. H. Phong and E. M. Stein, {\it Hilbert integrals, singular integrals, and Radon transforms. I}, Acta Math. 157 (1986), no. 1-2, 99157.

\bibitem{PhongStein} Phong, D.H., and E. M. Stein, {\it Radon transforms and torsion}. International Mathematics Research Notices 1991.4 (1991): 49-60.


\bibitem{Randol1} B. Randol, ``A lattice point problem." \textit{Trans. Amer. Math. Soc.} \textbf{121}, pp. 257-268, 1966.

\bibitem{JSS21} J. Roos, A. Seeger, R. Srivastava, ``Lebesgue Space Estimates for Spherical Maximal Functions on Heisenberg Groups.'' \textit{International Math. Research Notices}, https://doi.org/10.1093/imrn/rnab246, 2021. 


\bibitem{SandS4} R. Shakarchi, E. M. Stein \textit{Functional Analysis: Introduction to Further Topics in Analysis}. Princeton Lectures in Analysis IV, Princeton University Press, 2011.



\bibitem{OSch98} O. Schmidt, {\it Maximaloperatoren zu Hyperfl\"achen in Gruppen vom homogenen Typ}, Diplomarbeit an der Christian-Albrechts-Universitat zu Kiel.

\bibitem{seeger93} A. Seeger, {\it Degenerate Fourier integral operators in the plane, } Duke Math. J. \textbf{72} (1993), no. 1, 685--745.

\bibitem{sri24} R. Srivastava, {\it On the Kor\'anyi spherical maximal function on Heisenberg groups,} Mathematische Annalen, \textbf{388} (2024), no. 1, 191--247. 

\bibitem{Stein} E. M. Stein, \textit{Harmonic Analysis: Real-Variable Methods, Orthogonality, and Oscillatory Integrals}. Princeton Mathematical Series, 43. Monographs in Harmonic Analysis, III. Princeton University Press, 1993.



\end{thebibliography}

\bibliographystyle{alpha}

\end{document}